\documentclass[12pt]{amsart}
\usepackage{amsfonts}
\usepackage{amssymb,mathrsfs}
\usepackage{amsmath}
\usepackage{enumerate}
\usepackage{latexsym}
\usepackage{graphicx}
\usepackage{bm}
\usepackage{subfigure}
\usepackage{float}
\usepackage{amsmath}
\usepackage{amsthm}
\usepackage{verbatim}
\usepackage{vmargin}
\usepackage{amstext}
\usepackage{array}
\usepackage{booktabs}
\usepackage{indentfirst}
\usepackage[compress]{cite}
\usepackage{url}
\usepackage{geometry}
\numberwithin{equation}{section}

\newtheorem{thm}{Theorem}[section]
\newtheorem{cor}[thm]{Corollary}
\newtheorem{lma}[thm]{Lemma}

\newtheorem{defn}[thm]{Definition}
\newtheorem{rem}[thm]{Remark}

\theoremstyle{definition}

\begin{document}
\setlength{\oddsidemargin}{80pt}
\setlength{\evensidemargin}{80pt}
\setlength{\topmargin}{80pt}
\author{Jun~Xian, Xiaoda~Xu}

\address{J.~Xian\\Department of Mathematics and Guangdong Province Key Laboratory of Computational Science
 \\Sun
Yat-sen University
 \\
510275 Guangzhou\\
China.} \email{xianjun@mail.sysu.edu.cn}

\address{X.~Xu\\Department of Mathematics
 \\Sun
Yat-sen University
 \\
510275 Guangzhou\\
China.} \email{xuxd26@mail2.sysu.edu.cn}

\title[Bounds of star discrepancy for HSFC]{STAR DISCREPANCY BOUNDS BASED ON HILBERT SPACE FILLING CURVE STRATIFIED SAMPLING AND ITS APPLICATIONS}

\keywords{Star discrepancy; Stratified sampling; Hilbert space filling curve; Integral approximation.}

\date{\today}

\subjclass[2020]{11K38, 65D30.}
\begin{abstract}
In this paper, we consider the upper bound of the probabilistic star discrepancy based on Hilbert space filling curve sampling. This problem originates from the multivariate integral approximation, but the main result removes the strict conditions on the sampling number of the classical grid-based jittered sampling. The main content has three parts. First, we inherit the advantages of this new sampling and achieve a better upper bound of the random star discrepancy than the use of Monte Carlo sampling. In addition, the convergence order of the upper bound is improved from $O(N^{-\frac{1}{2}})$ to $O(N^{-\frac{1}{2}-\frac{1}{2d}})$. Second, a better uniform integral approximation error bound of the function in the weighted space is obtained. Third, other applications will be given. Such as the sampling theorem in Hilbert spaces and the improvement of the classical Koksma-Hlawka inequality. Finally, the idea can also be applied to the proof of the strong partition principle of the star discrepancy version.
\end{abstract}

\maketitle

\section{Introduction}

Many problems in industry, statistics, physics and finance need to introduce multivariable integrals. In order to effectively represent, analyze and process these integrals on the computer, it is necessary to discretize the sample points. A natural problem is whether these discrete sampling values can achieve the optimal fast convergence approximation of multivariable integrals. This is investigated by the discrepancy function method and sampling theory. And which also makes it a popular research topic in applied mathematics and engineering science.

For the multivariate function integral $\int_{[0,1]^{d}} f(x) dx$ defined on a $d-$dimensional unit cube, if a discrete sampling point set $P_{N,d}=\{t_{1},t_{2},\ldots,t_{N}\}$ is selected, it can be approximated by the sample mean $\frac{1}{N} \sum_{t \in P_{N, d}} f(t)$. Then the approximation error can be calculated by:

\begin{equation*}
  E_N(f)=\left|\int_{[0,1]^{d}} f(x) d x-\frac{1}{N} \sum_{t \in P_{N, d}} f(t)\right|.
\end{equation*}

Among various techniques to estimate the error of $E_N(f)$, star discrepancy has proven to be among the most efficient approaches. Star discrepancy was first used to measure the irregularity of the point distribution. For a point set $P_{N,d}=\{t_{1},t_{2},\ldots,t_{N}\}$, the star discrepancy is defined as follows:

\begin{equation}\label{*-d}D_{N}^{*}\left(P_{N,d}\right):=\sup _{x \in[0,1]^{d}}\left|\lambda([0, x])-\frac{\sum_{n=1}^{N} I_{[0, x]}\left(t_{n}\right)}{N}\right|,\end{equation}
where $\lambda$ denotes the $d$-dimensional Lebesgue measure, and $I_{[0,x]}$ denotes the characteristic function defined on the $d$-dimensional rectangle $[0,x]$.

A well-known result in discrepancy theory is the Koksma-Hlawka inequality~\cite{hlawka1961funktionen, koksma1942een}. It shows that the integral approximation error of any multivariate function can be separated into two parts, one is the star discrepancy of the defined point set $P_{N,d}$, and the other is the total variation of the function $f$ (under the definition of Hardy and Krause), i.e,:

\begin{equation}\label{K-H}
E_N(f)\leq D_{N}^{*}\left(P_{N, d}\right) V(f).\end{equation}

The equation \eqref{K-H} indicates that when $f$ is a function of bounded variation, a smaller upper bound on the star discrepancy yields a more accurate approximation error of the integration. Therefore, we focus on the research of the star discrepancy in the following.

There are many kinds of constructive sampling sets, also known as \textbf{low discrepancy point sets}~\cite{2010Digital}, which obtain a satisfactory convergence rate of star discrepancy bound. For a fixed dimension $d$, it can reach $O((\ln N)^{\alpha_{d}}/N)$, where $\alpha_{d}\ge 0$ is a constant that depends only on the dimension $d$. This kind of point set is widely used in many fields, such as colored noise, learning theory and finance ~\cite{Blue-Noise, Cristiano, phdpaper, rankla}, but this kind of point set also has some limitations. First of all, the convergence rate is strongly dependent on the dimensions. For high dimensional space, the convergence rate becomes slower, even worse than the convergence rate $O(N^{-\frac{1}{2}})$ obtained by classical Monte Carlo in some cases. While the classical Monte Carlo sampling method is independent of the dimensions, so it has strong applicability for especially large-dimensional data. Secondly, because the low discrepancy point set is a constructive sampling, it is difficult to deal with a large number of point sets generated by random sampling in real life, such as the processing of noise-contaminated signals. Therefore, it is necessary to explore a new sampling method that inherits the fast convergence characteristics of low discrepancy point sets in moderate dimensions and the strong applicability of Monte Carlo point sets to high dimensions.

In order to achieve a better approximation effect for multivariate integrals in higher dimensions, a large number of points must be sampled if we use the low discrepancy point sets. Which loses information about the moderate sample size. In other words, if it is required to find the optimal upper bound of the star discrepancy in the higher dimensional space under the moderate sample size, the low discrepancy point set is not applicable. Therefore, in recent years, many scholars have carried out a series of studies on this problem and tried to use the appropriate sampling point set. This study is called the search for the optimal pre-asymptotic star discrepancy bound. In 2001, Heinrich, Novak~\cite{Heinrich2001} et al. proved that the optimal pre-asymptotic upper bound can be obtained as long as the number of sampling points linearly depends on the dimension, and the upper bound is $c\cdot\frac{\sqrt{d}}{{\sqrt{N}}}$, where $c$ is an unknown positive constant. In 2011, Aistleitner~\cite{aistleitner2011covering} calculated that $c$ is at most 10. This constant has been improved to 2.53 ~\cite{Gnewuch2020} so far.  From a practical point of view, the following probabilistic pre-asymptotic upper bound ~\cite{aistleitner2014} can be obtained, that is,

\begin{equation}\label{sransAH}
D_N^{*}(Y)\leq 5.7\sqrt{4.9-\frac{\ln(1-q)}{d}}\frac{\sqrt{d}}{\sqrt{N}},
\end{equation}
holds with probability at least $q$ for the Monte Carlo point set $Y$.

Combined with the K-H inequality, it is easy to achieve the optimal probability approximation of the bounded variation function on the $d-$dimensional unit cube. Especially for high-dimensional data, this approximation overcomes some disadvantages of the traditional deterministic low discrepancy point sets, and gives a good approximation with a moderate sample size.

By optimizing the cover technique, the probability bound of \eqref{sransAH} is improved by ~\cite{Gnewuch2020}, which gives

\begin{equation}\label{sransGne}
D_N^{*}(Y)\leq 0.7729\sqrt{10.7042-\frac{\ln(1-q)}{d}}\frac{\sqrt{d}}{\sqrt{N}}.
\end{equation}
holds with probability at least $q$ for the Monte Carlo point set $Y$.

It is worth mentioning that the above pre-asymptotic star discrepancy bounds and the optimal probability approximation bounds are all obtained by using the Monte Carlo point set. Although the Monte Carlo method can get better approximation for high dimensional data, the convergence speed is slow, and the convergence speed is $O(N^{-\frac{1}{2}})$. How to improve this convergence speed and make the original approximation still applied to moderate sample size is a concern of scholars in recent years. Some scholars have noticed that the main reason for the slow convergence of the Monte Carlo method is the large variance of the sample mean function. Therefore, they have focused on the variance reduction technique and tried to use some new sampling methods to improve the approximation effect. For example, the probabilistic star discrepancy bound using Latin hypercube sampling ~\cite{Gnewuch2020} and the expected bound ~\cite{jittsamp,Doerr2} obtained by using jittered sampling. Jittered sampling is formed by isometric grid partition, and $[0,1]^{d}$ is divided into $m^d$ small sub-cubes $Q_{i},1\leq i\leq N,$ each with side length $\frac{1}{m},$ Figures \ref{fig1} and \ref{fig2} give $2$-dimensional and $3$-dimensional illustrations.

\begin{figure}[ht]%
\centering
\includegraphics[width=0.3\textwidth]{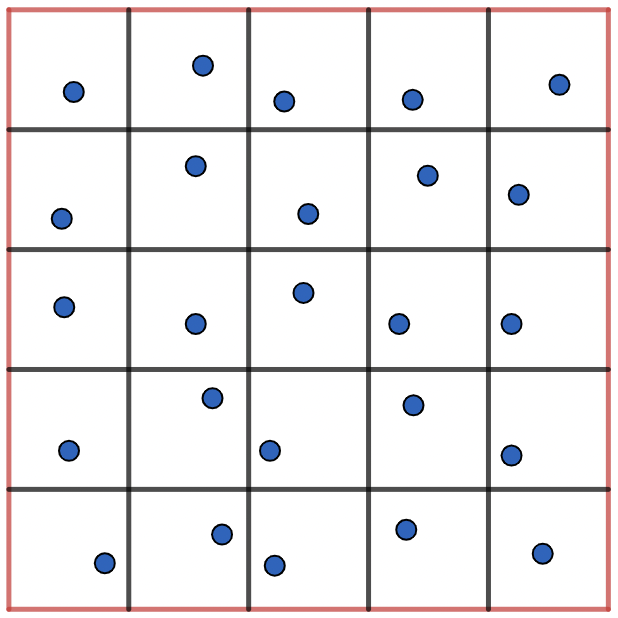}
\caption{Two-dimensional stratified sampling}\label{fig1}
\end{figure}

\begin{figure}[ht]%
\centering
\includegraphics[width=0.3\textwidth]{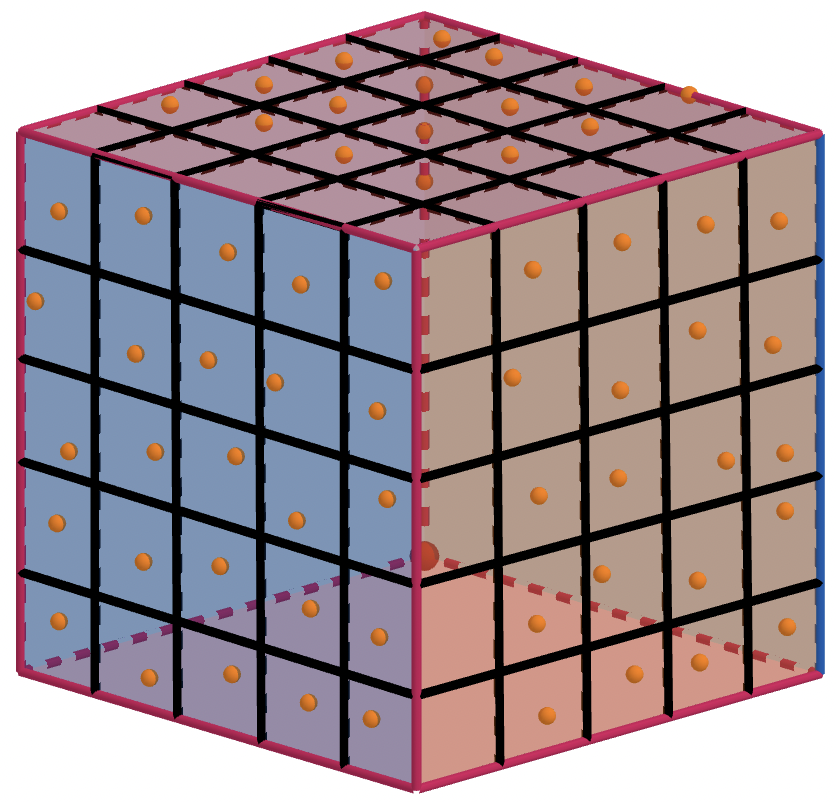}
\caption{Three-dimensional stratified sampling}\label{fig2}
\end{figure}

Moreover, although the classical jittered sampling inherits some advantages of stratified sampling, it is more demanding for the number of sampling points, i.e., $N=m^d$. The exponential dependence of the sampling number on $d$, which will inevitably lead to the curse of dimensionality, so it is difficult to obtain the pre-asymptotic star discrepancy bound. However, jittered sampling retains the value of theoretical and numerical research, such as, it can achieve a faster convergence rate than the Monte Carlo sampling in a relatively low-dimensional space. While Latin hypercube sampling overcomes the dimensionality problem and reduces the variance of the sample mean, the star discrepancy convergence order is still $O(N^{-\frac{1}{2}})$, see ~\cite{Gnewuch2020}.

If the problem is to reduce the error of the \eqref{K-H} under suitable sampling number, then for a class of bounded variation function spaces, there is

\begin{equation}\label{K-Hplusstarb1}
E_N(f)\leq 0.7729\sqrt{10.7042-\frac{\ln(1-q)}{d}}\frac{\sqrt{d}}{\sqrt{N}}\cdot V(f),\end{equation}
holds with probability at least $q$.

So how to improve the convergence speed of the approximation error in \eqref{K-Hplusstarb1} and preserve approximation information under the appropriate sampling number, based on the above introduction and discussion, there are two main ideas: one is to use the low discrepancy point sets, but note that the convergence order is $O((\ln N)^{\alpha_{d}}/N)$, where $\alpha_{d}\ge 0$ is a constant that depends on the dimension $d$. Generally for different point sets, $\alpha_{d}$ is at least $O(d)$, so for high-dimensional space, the number of samples required is particularly large; the other is to use the idea of stratified sampling, but for classical jittered sampling, the disadvantage is that $N=m^d$ sampling points are required, and for high-dimensional space, it faces the curse of dimensionality. Therefore, it is necessary to return to the general stratified sampling method. We try to use a sampling method that combines the advantages of low discrepancy point sets and stratified sampling, and to overcome the dimension problem to a certain extent.

The Hilbert space filling curve sampling method coincidentally inherits many advantages of general stratified sampling, and we describe it in detail below. This method was developed by A. B. Owen, Z. He et al. to study the variance of the sample mean function ~\cite{HO2016,HZ2019}. In our main result, we use Hilbert space filling curve sampling to obtain an improved upper bound of probability star discrepancy, and we give

\begin{equation}
D_{N}^{*}\left(X_{1}, X_{2}, \ldots, X_{N}\right)\leq \frac{c(d,q)}{N^{\frac{1}{2}+\frac{1}{2d}}},
\end{equation}
holds with probability at least $q$ for the Hilbert space filling curve sampling set $X$, where $c(d,q)$ is an explicit and computable constant depending on $d$ and $q$.

We can easily derive the probability star discrepancy bounds to the weighted form, and give the uniform probability approximation error in the weighted function space, that is, for a function in the weighted Sobolev space $F_{d,1,\gamma}$, the following 

\begin{equation}
\begin{aligned}
\sup_{f\in F_{d,1,\gamma},\|f\|_{F_{d,1,\gamma}}\leq 1}|\int_{[0,1]^{d}}f(x)dx-\frac{1}{N}\sum_{j=1}^{N}f(X_{j})|\leq\max_{\emptyset\neq u\subseteq I_{d}}\gamma_{u,d}\cdot \frac{c(|u|,q)}{N^{\frac{1}{2}+\frac{1}{2d}}}\end{aligned}
\end{equation}
holds with probability at least $\epsilon$ for the Hilbert space filling curve sampling set $X$.

The strong partition principle for the star discrepancy version is an open question mentioned in \cite{jittsamp, KP}, we use the probabilistic results to prove the following conclusion:

\begin{equation}
\mathbb{E}(D_{N}^{*}(P_{\Omega}))< \mathbb{E}(D_{N}^{*}(Y)),
\end{equation}
where $\Omega=\{\Omega_1,\Omega_2,\ldots,\Omega_N\}$ is an arbitrary equivolume partition and $P_{\Omega}$ is the corresponding stratified sampling set, and $Y=\{Y_{1}, Y_{2}, Y_{3}, \ldots, Y_{N}\}$ is simple random $d-$dimensional samples.

We also apply the main result to give explicit bounded expressions for a set of sampling inequalities in the general Hilbert space.

We prove:
\begin{align*}
(1-\frac{D_{N}^{*}(X)}{\gamma})\|f\|_{L^2}^2\leq \frac{1}{N}\sum_{j=1}^{N}f^{2}(x_j)\leq (\frac{D_{N}^{*}(X)}{\gamma}+1)\|f\|_{L^2}^2,
\end{align*}
holds with probability at least $1-\epsilon$($\epsilon$ the probability that $D_{N}^{*}(X)\leq \frac{c(d,\epsilon)}{N^{\frac{1}{2}+\frac{1}{2d}}}$ must hold), where $0<\gamma\leq 1$, and $f$ belongs to Hilbert space.

In the end, we use the variant K-H inequality, combined with the obtained probabilistic star discrepancy bound, to give an approximation error for the piecewise smooth function $f$ defined on a Borel convex subset of $[0,1]^d$.

We prove:

\begin{equation}
|\frac{\sum_{n=1}^{N}(f\cdot\mathbf{1}_{\Omega})(X_{n})}{N}-
\int_{\Omega}f(x)dx|\leq 2^{d}D_{N}^{*}(X_1,X_2,\ldots,X_N)\cdot V(f),
\end{equation}
holds with probability at least $q$, where $D_{N}^{*}(X_1,X_2,\ldots,X_N)$ is the star discrepancy of Hilbert space filling curve sampling $X$ and $q$ denotes the probability that the upper bound $D_{N}^{*}(X)\leq\frac{c(d,q)}{N^{\frac{1}{2}+\frac{1}{2d}}}$ must hold. Besides, 

\begin{equation}
V(f)=\sum_{u\subset\{1,2,\ldots,d\}}2^{d-|u|}\int_{[0,1]^{d}}
|\frac{\partial^{|u|}}{\partial x_{u}}f(x)|dx.
\end{equation}
The symbol $\frac{\partial^{|u|}}{\partial x_{n}}f(x)$ is the partial derivative of $f$ with respect to those components of $x$ with index in $u$.

The remainder of this paper is organised as follows. Section \ref{sec:notares} presents the preliminaries. Section \ref{mres:pfs} presents our main result. Section \ref{apps} gives some applications of the main result. Finally, in section \ref{discuss}, we conclude the paper with a short summary.

\section{Hilbert space filling curve sampling and some estimations}\label{sec:notares}

\textbf{1. Hilbert space filling curve sampling}

We first introduce Hilbert space filling curve sampling, which is abbreviated as HSFC sampling. We mainly adopt the definitions and symbols in \cite{HO2016,HZ2019}. HSFC sampling is actually a stratified sampling formed by a special partition method.

Let $a_i$ be the first $N=b^m$ points of the van der Corput sequence(van der Corput 1935) in base $b\ge 2, m=0,1,\ldots$. The integer $i-1\ge0$ is written in base $b$ as 

\begin{equation}
    i-1=\sum_{j=1}^{\infty}a_{ij}b^{j-1}
\end{equation}
for $a_{ij}\in \{0,\ldots,b-1\}$. Then, $a_i$ is defined by 

\begin{equation}
    a_i=\sum_{j=1}^{\infty}a_{ij}b^{-j}.
\end{equation}

The scrambled version of $a_1, a_2,\ldots, a_N$ is $x_1, x_2, \ldots, x_N$ written as 

\begin{equation}
    x_i=\sum_{j=1}^{\infty}x_{ij}b^{-j},
\end{equation}
where $x_{ij}$ are defined through random permutations of the $a_{ij}$. These permutations depend on $a_{ik},$ for $k<j.$ More precisely, $x_{i1}=\pi(a_{i1}),$ $x_{i2}=\pi_{a_{i1}}(a_{i2})$ and generally for $j\ge 2,$ 

\begin{equation}
    x_{ij}=\pi_{a_{i1\ldots a_i{j-1}}}(a_{ij}).
\end{equation}

Each random permutation is uniformly distributed over the $b!$ permutation of $\{0,\ldots,b-1\}$, and is mutually independent of the others. Thanks to the nice property of nested uniform scrambling, the data values in the scrambled sequence can be reordered such that

\begin{equation}
    x_i\sim U(I_i),
\end{equation}
independently with 
$$I_i=[\frac{i-1}{N},\frac{i}{N}]$$
for $i=1,2,\ldots, N=b^m$.

Let $$E_i=H(I_i):=\{H(x)|x\in I_i\},$$ where $H$ is a mapping.

Then

$$X_i=H(x_i)\sim U(E_i), i=1,2,3,\ldots N=b^m$$ is the corresponding stratified samples. Set $r_i$ be the diameter of $E_i$, according to the property of HSFC sampling, the following estimation holds,

\begin{equation}\label{diaest}
r_i\leq 2\sqrt{d+3}\cdot N^{-\frac{1}{d}}.
\end{equation}

\textbf{2. Minkowski content}

We use the definition of Minkowski content in \cite{ACV}, which provides convenience for analysing the boundary properties of the test set $B$ in \eqref{*-d}. For a set $\Omega\subset [0,1]^d$, define

\begin{equation}
\mathscr{M}(\partial \Omega)=\lim_{\epsilon\to 0}\frac{\lambda((\partial \Omega)_{\epsilon})}{2\epsilon}, \end{equation}
where $(\partial \Omega)_{\epsilon}=\{X\in \mathbb{R}^d|dist(x,\partial \Omega)\leq \epsilon\}$. If $\mathscr{M}(\partial \Omega)$ exists and is finite, then $\partial \Omega$ is said to admit $(d-1)-$dimensional Minkowski content. If $\Omega$ is a convex set, then it is easy to see that $\partial \Omega$ admits $(d-1)-$dimensional Minkowski content. Furthermore, $\mathscr{M}(\partial \Omega)\leq 2d$ as the outer surface area of a convex set in $[0,1]^d$ is bounded by the surface area of the unit cube $[0,1]^{d}$, which is $2d$.

\textbf{3. $\delta$-covers}

To discretise the star discrepancy, we use the definition of $\delta-$covers as in\cite{Doerr}.

\begin{defn}
For any $\delta\in (0,1]$, a finite set $\Gamma$ of points in $[0,1)^{d}$ is called a $\delta$–cover of $[0,1)^{d}$, if for every $y\in [0,1)^{d}$, there exist $x, z\in \Gamma\cup \{0\}$ such that $x\leq y\leq z$ and $\lambda([0, z])-\lambda([0, x])\leq \delta$. The number $\mathcal{N}(d,\delta)$ denotes the smallest cardinality of a $\delta$–cover of $[0,1)^{d}$.
\end{defn}

From \cite{Gnewuch2008Bracketing}, combined with Stirling's formula, the following estimate holds for $\mathcal{N}(d,\delta)$, that is, for any $d\ge 1$ and $\delta\in (0,1)$,

\begin{equation}\label{delcovbd}
    \mathcal{N}(d,\delta)\leq 2^d\cdot \frac{e^d}{\sqrt{2\pi d}}\cdot(\delta^{-1}+1)^{d}.
\end{equation}

Furthermore, the following lemma provides a convenient way to estimate the star discrepancy with $\delta-$covers.

\begin{lma}\label{cvdeldb}\cite{Doerr}
Let $P=\{p_{1},p_{2},\ldots,p_{N}\}\subset [0,1]^{d}$ and $\Gamma$ be $\delta-$covers, then

\begin{equation}
D_{N}^{*}(P)\leq D_{\Gamma}(P)+\delta,
\end{equation}
where

\begin{equation}
D_{\Gamma}(P):=\max_{x\in\Gamma}|\lambda([0,x])-\frac{\sum_{n=1}^{N} I_{[0, x]}\left(p_{n}\right)}{N}|.
\end{equation}

\end{lma}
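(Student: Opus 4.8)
The plan for Lemma~\ref{cvdeldb} is to replace the supremum over the continuum of anchored test boxes $[0,y]$, $y\in[0,1]^d$, which defines $D_N^*(P)$, by the finite maximum over $x\in\Gamma$, which defines $D_\Gamma(P)$, at the cost of an additive $\delta$. Write $A(y):=\lambda([0,y])$ for the volume functional and $\widehat A(y):=\frac1N\sum_{n=1}^N I_{[0,y]}(p_n)$ for the empirical counting functional. The two structural facts I would use are: both $A$ and $\widehat A$ are nondecreasing for the componentwise partial order on $[0,1]^d$, simply because $y\le y'$ forces $[0,y]\subseteq[0,y']$; and $A$ is continuous. Continuity of $A$ together with monotonicity lets one check that the supremum defining $D_N^*(P)$ over the closed cube $[0,1]^d$ coincides with the supremum over the half-open cube $[0,1)^d$ (approach any test point with a coordinate equal to $1$ from inside the cube), so that the $\delta$-cover $\Gamma\subseteq[0,1)^d$ is applicable.

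Next I would fix $y\in[0,1)^d$ and invoke the $\delta$-cover property to obtain $x,z\in\Gamma\cup\{0\}$ with $x\le y\le z$ and $A(z)-A(x)\le\delta$. From the inclusion chain $[0,x]\subseteq[0,y]\subseteq[0,z]$ and monotonicity I get the simultaneous sandwiches $A(x)\le A(y)\le A(z)$ and $\widehat A(x)\le\widehat A(y)\le\widehat A(z)$. Hence the upper deviation is controlled by
\[
A(y)-\widehat A(y)\;\le\;A(z)-\widehat A(x)\;=\;\bigl(A(x)-\widehat A(x)\bigr)+\bigl(A(z)-A(x)\bigr)\;\le\;D_\Gamma(P)+\delta,
\]
and, symmetrically, the lower deviation by
\[
\widehat A(y)-A(y)\;\le\;\widehat A(z)-A(x)\;=\;\bigl(\widehat A(z)-A(z)\bigr)+\bigl(A(z)-A(x)\bigr)\;\le\;D_\Gamma(P)+\delta,
\]
where in each line I bounded $|A(w)-\widehat A(w)|$ by $D_\Gamma(P)$ for $w\in\{x,z\}\cap\Gamma$ and used $A(z)-A(x)\le\delta$. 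Combining the two displays gives $|A(y)-\widehat A(y)|\le D_\Gamma(P)+\delta$ for every $y\in[0,1)^d$, and taking the supremum over $y$ (and the closed-to-half-open reduction of the first paragraph) yields $D_N^*(P)\le D_\Gamma(P)+\delta$.

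There is no deep obstacle here: the heart of the argument is a two-sided sandwiching that exploits the joint monotonicity of $A$ and $\widehat A$. The only points that need a little care are bookkeeping: first, the reduction of the supremum from $[0,1]^d$ to $[0,1)^d$, so that a $\delta$-cover of the half-open cube may legitimately be used; and second, the endpoints $x=0$ or $z=0$, which are permitted members of $\Gamma\cup\{0\}$ but need not lie in $\Gamma$, so $D_\Gamma(P)$ does not directly bound the deviation there. In those cases one argues directly from the fact that $A$ vanishes on any box with a zero coordinate: if $x=0$ then already $A(z)-\widehat A(x)\le A(z)=A(z)-A(x)\le\delta$, settling the upper deviation, while $z=0$ forces $y=0$, a degenerate test box handled on its own. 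Disposing of these edge cases cleanly is the only mildly delicate part of the proof.
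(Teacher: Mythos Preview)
Your argument is correct and is precisely the standard sandwiching proof for this lemma: use the bracketing pair $x\le y\le z$ from the $\delta$-cover, exploit monotonicity of both the volume and the empirical counting functional to trap the signed discrepancy at $y$ between quantities evaluated at $x$ and $z$, and absorb the slack $A(z)-A(x)\le\delta$. The edge cases $x=0$ and $z=0$ are handled sensibly; you might note for completeness that when $y$ has some coordinate equal to $0$ the box $[0,y]$ has measure zero and the discrepancy at $y$ is bounded by $\widehat A(z)\le D_\Gamma(P)+A(z)\le D_\Gamma(P)+\delta$ for any $z\in\Gamma$ with $z\ge y$ and $A(z)\le\delta$, which the cover provides.

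As for comparison with the paper: the paper does not give its own proof of this lemma. It is stated with a citation to \cite{Doerr} (Doerr, Gnewuch, Srivastav, \emph{J.\ Complexity} 2005) and used as a black box in the proof of Theorem~\ref{convord1}. Your proposal supplies exactly the argument one finds in that reference, so there is no divergence in approach to report.
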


\textbf{4. Bernstein inequality}

At the end of this section, we  will restate the Bernstein inequality which will be used in the estimation of star discrepancy bounds.

\begin{lma}\cite{FDX2007}\label{Binequ}
Let $Z_1,\ldots,Z_N$ be independent random variables with expected
values $\mathbb E (Z_j)=\mu_{j}$ and variances $\sigma_{j}^2$ for $j=1,\ldots,N$.  Assume  $|Z_j-\mu_j|\le C$(C is a constant) for each $j$ and set $\Sigma^2:=\sum_{j=1}^N\sigma_{j}^2$, then
for any $\lambda \ge 0$,
$$\mathbb P\left\{\Big|\sum_{j=1}^N [Z_j-\mu_j]\Big|\ge \lambda\right \}\le
2\exp\left(-\frac{\lambda^2}{2\Sigma^2+\frac{2}{3}C \lambda}\right).$$
\end{lma}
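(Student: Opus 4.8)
The plan is to establish this inequality by the classical exponential–moment (Chernoff) method; as it is quoted from \cite{FDX2007}, the argument below is essentially the one found there, and only elementary probability is used. First I would centre the variables: put $W_j:=Z_j-\mu_j$, so that $\mathbb{E}(W_j)=0$, $\mathbb{E}(W_j^2)=\sigma_j^2$ and $|W_j|\le C$, and set $S:=\sum_{j=1}^N W_j$. For any $t>0$, Markov's inequality applied to $e^{tS}$ together with independence gives $\mathbb{P}(S\ge\lambda)\le e^{-t\lambda}\prod_{j=1}^N\mathbb{E}(e^{tW_j})$, so it suffices to bound each factor and then optimise over $t$.

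The heart of the proof is the moment generating function estimate. Expanding $e^{tW_j}$ in a power series, using $\mathbb{E}(W_j)=0$ and the bound $|\mathbb{E}(W_j^k)|\le C^{k-2}\sigma_j^2$ for $k\ge 2$ (because $|W_j|^{k}\le C^{k-2}W_j^2$), together with the elementary inequality $k!\ge 2\cdot 3^{k-2}$, one obtains for $0<t<3/C$
\[
\mathbb{E}(e^{tW_j})\le 1+\frac{\sigma_j^2 t^2}{2}\sum_{k\ge 0}\Big(\frac{tC}{3}\Big)^{k}=1+\frac{\sigma_j^2 t^2}{2(1-tC/3)}\le\exp\!\Big(\frac{\sigma_j^2 t^2}{2(1-tC/3)}\Big),
\]
the last inequality being $1+x\le e^x$. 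Multiplying over $j$ and recalling $\Sigma^2=\sum_j\sigma_j^2$ yields $\mathbb{P}(S\ge\lambda)\le\exp\!\big(-t\lambda+\tfrac{\Sigma^2 t^2}{2(1-tC/3)}\big)$ for every $t\in(0,3/C)$.

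It remains to optimise the exponent. Taking $t=\lambda/(\Sigma^2+C\lambda/3)$, which indeed lies in $(0,3/C)$, a short computation gives $1-tC/3=\Sigma^2/(\Sigma^2+C\lambda/3)$, and substituting produces $\mathbb{P}(S\ge\lambda)\le\exp\!\big(-\lambda^2/(2\Sigma^2+\tfrac23 C\lambda)\big)$. Repeating the argument with each $W_j$ replaced by $-W_j$ (same variances, same sup-norm bound) bounds $\mathbb{P}(-S\ge\lambda)$ by the same quantity, and a union bound over these two one-sided events yields the factor $2$; the cases $\lambda=0$ and $\Sigma^2=0$ are trivial. The only step demanding genuine care is the moment generating function bound: one must exploit the variance $\sigma_j^2$ rather than the crude bound $C^2$ so that the denominator carries $\Sigma^2$ and not $NC^2$, and must propagate the constant $\tfrac23$ consistently through the series estimate $k!\ge 2\cdot 3^{k-2}$ and the choice of $t$; the remaining manipulations are routine.
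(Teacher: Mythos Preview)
Your argument is correct and is the standard Chernoff--method proof of Bernstein's inequality: centre, bound the moment generating function via $|W_j|^k\le C^{k-2}W_j^2$ and $k!\ge 2\cdot 3^{k-2}$, sum the geometric series, and optimise in $t$. The paper does not actually prove this lemma; it simply quotes it from \cite{FDX2007} and uses it as a black box, so there is no in-paper argument to compare against---what you have written is essentially the proof one finds in the cited reference.
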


\section{Probabilistic star discrepancy bound for HSFC sampling}\label{mres:pfs}

\begin{thm}\label{convord1}
For integer number $b\ge 1$, $m\ge 1$ and $N=b^m$, then for the well defined $d-$dimensional stratified samples $X_i\sim U(E_i), i=1,2,\ldots, N=b^m$ in Section 2, we have

\begin{equation}
D_{N}^{*}\left(X_{1}, X_{2}, \ldots, X_{N}\right)\leq \frac{6d^{\frac{3}{4}}}{N^{\frac{1}{2}+\frac{1}{2d}}}\cdot\sqrt{d\ln(N+1)+c(d,q)}+\frac{2c(d,q)}{3N},
\end{equation}
holds with probability at least $q$,
where $$c(d,q)=\ln\frac{(2e)^{d}}{\sqrt{2\pi d}\cdot (1-q)}.$$
\end{thm}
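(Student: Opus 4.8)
The plan is to discretise the supremum defining $D_N^*$ by a $\delta$–cover, and then to bound the deviation on each test box by Bernstein's inequality, the key point being that stratification concentrates essentially all of the variance near the boundary of the box, so that $\Sigma^2$ is of smaller order than for i.i.d.\ points.

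First I would fix $\delta = 1/N$ and take $\Gamma$ a $\delta$–cover of $[0,1)^d$ with $|\Gamma| = \mathcal N(d,1/N)$; by Lemma \ref{cvdeldb} it is then enough to control $D_\Gamma = \max_{x\in\Gamma}|\lambda([0,x]) - \tfrac1N\sum_n I_{[0,x]}(X_n)|$ and add $\delta$. Fix $x\in\Gamma$ and put $Z_i = I_{[0,x]}(X_i)$. Since $H$ is measure preserving, $\lambda(E_i) = 1/N$, so the $Z_i$ are independent with means $\mu_i = \mathbb E Z_i = N\lambda(E_i\cap[0,x])$, and because $\{E_i\}_{i=1}^N$ partitions $[0,1]^d$ we get $\tfrac1N\sum_i\mu_i = \lambda([0,x])$. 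Thus the quantity at $x$ equals $\tfrac1N\big|\sum_i(Z_i-\mu_i)\big|$, with $|Z_i-\mu_i|\le 1$.

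The crux is the variance sum $\Sigma^2 = \sum_i\sigma_i^2$, $\sigma_i^2 = \mu_i(1-\mu_i)$. If $E_i\subseteq[0,x]$ then $\mu_i=1$, and if $E_i\cap[0,x]=\emptyset$ then $\mu_i=0$; either way $\sigma_i^2=0$. Otherwise $E_i=H(I_i)$ is connected (continuous image of an interval) and meets both $[0,x]$ and its complement, hence meets $\partial[0,x]$; by \eqref{diaest} it then lies in the tube $(\partial[0,x])_\rho$ with $\rho = 2\sqrt{d+3}\,N^{-1/d}$. Since the $E_i$ are essentially disjoint of volume $1/N$, the number of indices with $\sigma_i^2\neq0$ is at most $N\lambda((\partial[0,x])_\rho)$, and as $[0,x]$ is a box this is at most $3dN\rho$ (cover $(\partial[0,x])_\rho\cap[0,1]^d$, for each coordinate $j$, by a slab of width $2\rho$ around $\{y_j=x_j\}$ and a slab of width $\rho$ along $\{y_j=0\}$; alternatively invoke convexity and $\mathscr M(\partial[0,x])\le 2d$). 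With $\sigma_i^2\le\tfrac14$ this yields $\Sigma^2\le\tfrac{3dN\rho}{4} = \tfrac{3d\sqrt{d+3}}{2}\,N^{1-1/d}$ — of order $d^{3/2}N^{1-1/d}$ rather than the order $N$ one has for Monte Carlo points, which is precisely where the extra factor $N^{-1/(2d)}$ originates.

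Finally I would apply Lemma \ref{Binequ} with $C=1$: with $\lambda = \sqrt{2\Sigma^2 t}+\tfrac23 t$ and $t = \ln\frac{2\mathcal N(d,1/N)}{1-q}$ the Bernstein tail at $x$ is at most $(1-q)/\mathcal N(d,1/N)$, so a union bound over $\Gamma$ gives $D_N^*\le\lambda/N+\delta \le \frac{\sqrt{2\Sigma^2 t}}{N}+\frac{2t}{3N}+\frac1N$ with probability at least $q$. Inserting the variance bound and using \eqref{delcovbd} in the form $\mathcal N(d,1/N)\le\frac{(2e)^d}{\sqrt{2\pi d}}(N+1)^d$, so that $t\le d\ln(N+1)+c(d,q)$ up to the stated constant, the leading term is $\le\frac{\sqrt{3d\sqrt{d+3}}}{N^{1/2+1/(2d)}}\sqrt{d\ln(N+1)+c(d,q)}$, and the elementary inequality $\sqrt{3d\sqrt{d+3}}\le 6d^{3/4}$ (equivalently $3\le 143d$, valid for $d\ge1$) gives the first term, while the remaining $O(N^{-1})$ contributions collapse into $\frac{2c(d,q)}{3N}$. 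I expect the variance estimate of the third paragraph — recognising that only boundary strata carry variance and bounding their number via \eqref{diaest} and the geometry of $\partial[0,x]$ — to be the main obstacle; once $\Sigma^2 = O(d^{3/2}N^{1-1/d})$ is in hand, the $\delta$–cover discretisation and the Bernstein step are routine, if slightly tedious, bookkeeping of constants.
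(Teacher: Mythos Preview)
Your proposal is correct and follows essentially the same route as the paper: discretise by a $1/N$-cover, note that only the strata $E_i$ meeting $\partial[0,x]$ carry any variance, bound their number via the diameter estimate \eqref{diaest} together with the volume of a tube around $\partial[0,x]$, and then apply Bernstein with a union bound over the cover. The paper phrases the tube-volume step through Minkowski content (taking $\sigma=3$ and $\mathscr M(\partial R)\le 2d$) rather than your direct slab argument for boxes, and uses the cruder $\Sigma^2\le |L|$ in place of your $\sigma_i^2\le\tfrac14$, but the structure and the resulting constants are otherwise the same---including the same casual absorption of the residual $O(d\ln(N+1)/N)$ and $\delta=1/N$ terms into the stated $\tfrac{2c(d,q)}{3N}$.
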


\begin{rem}
 Theorem \ref{convord1} improves the convergence order of the probabilistic star discrepancy bound. It is improved from $O(N^{-\frac{1}{2}})$ to $O(N^{-\frac{1}{2}-\frac{1}{2d}}\cdot (\ln N)^{\frac{1}{2}})$. Which can be compared to that of \eqref{sransAH} and the part of the star discrepancy bound in \eqref{K-Hplusstarb1}. Secondly, the theorem is not as demanding as classical jittered sampling (i.e., it requires the number of sampling points to depend exponentially on the dimension, i.e., $N=m^d$. In fact the number of sampling points in HSFC sampling method does not depend on the dimensions). Therefor, it gives a better integral approximation error in the pre-asymptotic case, which can be obtained by using the K-H inequality.
\end{rem}
\begin{proof}
Let $A$ be a subset of $E_{i}$, then according to $X_{i}\sim U(E_{i}),1\leq i\leq N$, it follows that

\begin{equation}\label{unidistri}
\mathbb{P}(X_{i}\in A)=\frac{\lambda(A)}{\lambda(E_{i})}=N\lambda(A).
\end{equation}

Now, for an arbitrary $d-$dimensional rectangle $R=[0,x]\in[0,1]^{d}$ with diameter $\kappa$. When $x$ passes through the unit cube $[0,1)^{d}$, we can choose two points $y,z$ such that $y\leq x\leq z$ and $\lambda([0,z])-\lambda([0,y])\leq \frac{1}{N}$.  Let $R_{0}=[0,y]$ and $R_{1}=[0,z]$, then we have $$R_{0}\subseteq R\subset R_{1},$$ and

\begin{equation}\label{r1r0diff0}
\lambda(R_{1})-\lambda(R_{0})\leq \frac{1}{N}.
\end{equation}

We know that $R_{0}$ has a smaller diameter than $\kappa$, so we set it to $\kappa_{0}$, and $R_{1}$ has a larger diameter than $\kappa$, so we set it to $\kappa_{1}$. This forms a bracketing cover for the set $R$, and from \eqref{delcovbd} and \eqref{r1r0diff0}, we can derive the upper bound for the bracketing cover pair $(R_{0},R_{1})$. It has a cardinality at most $2^{d-1}\frac{e^{d}}{\sqrt{2\pi d}}(N+1)^{d}$. Therefore, from the lemma \ref{cvdeldb}, we obtain

\begin{equation}\label{sdbrela0}
D_{N}^{*}(Y_{1},Y_{2},
\ldots,Y_{N};R)\leq\max_{i=0,1}
D_{N}^{*}(Y_{1},Y_{2},
\ldots,Y_{N};R_{i})+\frac{1}{N}.
\end{equation}

For an anchored box $R$ in $[0,1]^{d}$, it is easy to check that $R$ is representable as a disjoint union of $E_{i}'s$ entirely contained in $R$ and the union of $l$ pieces which are the intersections of some $E_{j}'s$ and $R$, i.e,

\begin{equation}
R=\bigcup_{i\in I}E_{i}\cup\bigcup_{j\in J}(E_{j}\cap \partial R),
\end{equation}
where $I,J$ denote the index-sets.

By the definition of Minkowski content, for any $\sigma>2$, there exists $\epsilon_0$ such that $\lambda((\partial R)_{\epsilon})\leq \sigma \epsilon\mathscr{M}(\partial R)$ whenever $\epsilon\leq \epsilon_0$.

From \eqref{diaest}, the diameter for each $E_i$ is at most $2\sqrt{d+3}\cdot N^{-\frac{1}{d}}$. We can assume $N>(\frac{2\sqrt{d+3}}{\epsilon_0})^d$, then $2\sqrt{d+3}\cdot N^{-\frac{1}{d}}:=\epsilon<\epsilon_0$ and $\bigcup_{j\in J}(E_{j}\cap \partial R)\subseteq (\partial R)_{\epsilon}$; thus

$$
|J|\leq \frac{\lambda((\partial R)_{\epsilon})}{\lambda(E_i)}\leq \frac{\sigma \epsilon\mathscr{M}(\partial R)}{N^{-1}}=2\sqrt{d+3}\sigma \mathscr{M}(\partial R)N^{1-\frac{1}{d}}.
$$

Without loss of generality, we can set $\sigma=3$, and combined with the fact that $\mathscr{M}(\partial R)\leq 2d$, it follows that 

\begin{equation} \label{cardJ}
|J|\leq 12d\sqrt{d+3}\cdot N^{1-\frac{1}{d}}.
\end{equation}

The same argument \eqref{cardJ} applies to test sets $R_0$ and $R_1$.

For $R_0$ or $R_1$, set 

\begin{equation}
D_{N}^{*}(X_{1},X_{2},
\ldots,X_{N};R')=\max_{i=0,1}
D_{N}^{*}(X_{1},X_{2},
\ldots,X_{N};R_{i}).
\end{equation}

$R'$ is also a test rectangle, which can be divided into two parts

\begin{equation}
R'=\bigcup_{k\in K}E_{k}\cup\bigcup_{l\in L}(E_{l}\cap R'),
\end{equation}
and the cardinality of $R^{'}\subset[0,1)^{d}$ is at most $2^{d-1}\frac{e^{d}}{\sqrt{2\pi d}}(N+1)^{d}$ according to the $\delta-cover$ estimate. 

Let

\begin{equation}
T=\bigcup_{l\in L}(E_{l}\cap R'), |L|=|\{1,2,\ldots,l\}|.
\end{equation}

If we define new random variables $\chi_{j}, 1\leq j\leq l$ as follows

\begin{equation}
    \chi_{l}=\left\{
\begin{aligned}
&1, X_{l}\in E_{l}\cap R'\\&
0, otherwise,
\end{aligned}
\right.
\end{equation}
then from the above discussions, we have

\begin{equation}\label{murmut0311}
\begin{aligned}
&N\cdot D_{N}^{*}\left(X_{1}, X_{2}, \ldots, X_{N};R'\right)\\&=N\cdot D_{N}^{*}\left(X_{1}, X_{2}, \ldots, X_{N};T\right)\\&=|\sum_{l=1}^{|L|}\chi_{l}-N(\sum_{l=1}^{|L|}\lambda(E_{l}\cap R'))|.
\end{aligned}
\end{equation}

Since

\begin{equation}
\mathbb{P}(\chi_{l}=1)=\frac{\lambda(E_{l}\cap R')}{\lambda(E_{l})}=N\cdot\lambda(E_{l}\cap R'),
\end{equation}
hence

\begin{equation}\label{echi0313}
    \mathbf{E}(\chi_{l})=N\cdot\lambda(E_{l}\cap R').
\end{equation}

Thus, from \eqref{murmut0311} and \eqref{echi0313}, we obtain

\begin{equation}\label{Rprime00314}
N\cdot D_{N}^{*}\left(X_{1}, X_{2}, \ldots, X_{N};R'\right)=|\sum_{l=1}^{|L|}(\chi_{l}-\mathbb{E}(\chi_{l}))|.
\end{equation}

Let $\sigma_{l}^{2}=\mathbb{E}(\chi_{l}-\mathbb{E}(\chi_{l}))^{2}$ and $\Sigma^{2}=(\sum_{l=1}^{|L|}\sigma_{l}^{2})^{\frac{1}{2}}$, then we have

\begin{equation}
    \Sigma^{2}\leq |L|\leq 12d\sqrt{d+3}\cdot N^{1-\frac{1}{d}}.
\end{equation}

Therefore, from Lemma \ref{Binequ}, for every $R'$, we have,

\begin{equation}\label{pchi}
\mathbb{P}\left
(\Big|\sum_{l=1}^{|L|}(\chi_{l}-\mathbb{E}(\chi_{l}))\Big|>\lambda\right)\leq
2\cdot\exp(-\frac{\lambda^{2}}{24d\sqrt{d+3}\cdot N^{1-\frac{1}{d}}+\frac{2\lambda}{3}}).
\end{equation}

Let

\begin{equation}\label{scrb}
\mathscr{B}=\bigcup_{R'}\left(\Big|\sum_{l=1}^{|L|}
(\chi_{l}-\mathbb{E}(\chi_{l}))|>\lambda\right).
\end{equation}

Then using covering numbers, we have

\begin{equation}\label{pb00}
\mathbb{P}(\mathscr{B})\leq (2e)^{d}\cdot\frac{1}{\sqrt{2\pi d}}\cdot(N+1)^{d}\cdot
\exp(-\frac{\lambda^{2}}{24d\sqrt{d+3}\cdot N^{1-\frac{1}{d}}+\frac{2\lambda}{3}}).
\end{equation}

Let $A(d,q,N)=d\ln(2e)+d\ln(N+1)-\frac{\ln(2\pi d)}{2}-\ln(1-q)$, and we choose

\begin{equation}
\lambda=\sqrt{24d\sqrt{d+3}\cdot A(d,q,N)+\frac{A^2(d,q,N)}{9N^{1-\frac{1}{d}}}}\cdot N^{\frac{1}{2}-\frac{1}{2d}}+\frac{A(d,q,N)}{3}.
\end{equation}

By substituting $\lambda$ into \eqref{pb00}, we have

\begin{equation}
\mathbb{P}(\mathscr{B})\leq 1-q.
\end{equation}

Combining above and \eqref{Rprime00314}, we obtain

\begin{equation}
\begin{aligned}
&D_{N}^{*}\left(X_{1}, X_{2}, \ldots, X_{N};R'\right)\\&\leq \frac{\sqrt{24d\sqrt{d+3}\cdot A(d,q,N)+\frac{A^2(d,q,N)}{9N^{1-\frac{1}{d}}}}}
{N^{\frac{1}{2}+\frac{1}{2d}}}
+\frac{A(d,q,N)}{3N}
\end{aligned}
\end{equation}

holds with probability at least $q$.

Thus, obviously, we have

\begin{equation}\label{maxsdb0}
\begin{aligned}
&\max_{i=0,1}
D_{N}^{*}(X_{1},X_{2},
\ldots,X_{N};R_{i})\\&\leq \frac{\sqrt{24d\sqrt{d+3}\cdot A(d,q,N)+\frac{A^2(d,q,N)}{9N^{1-\frac{1}{d}}}}}
{N^{\frac{1}{2}+\frac{1}{2d}}}
+\frac{A(d,q,N)}{3N}\\&\leq \frac{\sqrt{24d\sqrt{d+3}\cdot A(d,q,N)}}
{N^{\frac{1}{2}+\frac{1}{2d}}}
+\frac{2A(d,q,N)}{3N}
\end{aligned}
\end{equation}
holds with probability at least $q$.

\begin{equation}
    \begin{aligned}
    A(d,q,N)&=\ln\frac{(2e)^{d}}{\sqrt{2\pi d}\cdot (1-q)}+d\ln(N+1)\\&=c(d,q)+d\ln(N+1),
    \end{aligned}
\end{equation}
where

\begin{equation}
    c(d,q)=\ln\frac{(2e)^{d}}{\sqrt{2\pi d}\cdot (1-q)}.
\end{equation}

The proof is completed.
\end{proof}

\section{Applications of the main result}\label{apps}

The following will give three applications of the theorem ~\ref{convord1}. One is the application of uniform integral approximation in weighted function space, and the other is to prove the strong partition principle of star discrepancy version. The strong partition principle is the problem proposed by ~\cite{KP,KP2}, which has been perfectly solved for the $L_2-$discrepancy, but remains to be solved for the star discrepancy version. The third is the sampling theorem in general Hilbert space. Finally, it is applied to the integral approximation problem on the Borel convex subset in $[0,1]^d$.

\subsection{Uniform integral approximation for functions in weighted function space}

Many high-dimensional problems in practical applications have low effective dimension ~\cite{Kuo2005}, that is, they have different weights for the function values of different components. Therefore, the problem is abstracted as finding the uniform integral approximation error in weighted Sobolev space.

Let $F_{d,1}$ be a Sobolev space, for functions $f\in F_{d,1} $, $f$ is differentiable for any variable and has a finite $L_{1}-$module for its first-order differential. For $d>1$, the norm in $F_{d,1}$ is defined as 

$$
\|f\|_{F_{d,1}}=\|D^{\vec{1}}f\|_{L_{1}([0,1]^{d})}
=\int_{[0,1]^{d}}|D^{\vec{1}}f(x)|dx,
$$
where $\vec{1}=[1,1,\ldots,1]$ and $D^{\vec{1}}=\partial^{d}/\partial x_{1}\ldots\partial x_{d}$.

Then for weighted Sobolev space $F_{d,1,\gamma}$, its norm is 

\begin{equation}\label{fd1g}
\|f\|_{F_{d,1,\gamma}}=\sum_{u\subseteq I_{d}}\gamma_{u,d}
\int_{[0,1]^{|u|}}|\frac{\partial^{|u|}}{\partial x_{u}}f(x_{u},1)|dx_{u}.\end{equation}

Considering the problem of function approximation in $F_{d,1,\gamma}$ space. The sample mean method can still be used, for

$$
I(f)=\int_{[0,1]^d}f(x)dx,
$$
and sample mean function

$$
\tilde{I}(f,\mathbf{P})=\frac{1}{N}\sum_{i=1}^{N}f(x_{i}).
$$

We consider the worst-case error 
\begin{equation*}
  E_N(f)=\left|\int_{[0,1]^{d}} f(x) d x-\frac{1}{N} \sum_{t \in P_{N, d}} f(t)\right|.
\end{equation*}

Then according to Hlawka and Zaremba identity \cite{Ems}, we have

$$
e(E_N(f))=\sup_{f\in F_{d,1,\gamma},\|f\|_{F_{d,1,\gamma}}\leq 1}|I(f)-\tilde{I}(f,\mathbf{P})|
=D_{N,\gamma}^{*}(t_{1},t_{2},\ldots,t_{N}).
$$

For uniform integral approximation in weighted Sobolev spaces, we have the following theorem:

\begin{thm}\label{sobo2}

Let $f\in F_{d,1,\gamma}$ be functions in Sobolev sapce. Let integer $b\ge 1$, $m\ge 1$ and $N=b^m$. Let $b,\lambda,\lambda_{0},c$ be some integers, $\lambda_{0}$ be constants such that $b\lambda^{2}\leq e^{2\lambda^{2}}$ holds for all $\lambda\ge\lambda_{0}$, and $c=\max\{2,b,\lambda_{0},\frac{1}{\log_{2}(2-\epsilon)}\}$. Then for $d-$dimensional Hilbert space filling curve samples $X_i\sim U(E_i), i=1,2,\ldots, N=b^m$, we have

\begin{equation}
\begin{aligned}
\sup_{f\in F_{d,1,\gamma},\|f\|_{F_{d,1,\gamma}}\leq 1}&|\int_{[0,1]^{d}}f(x)dx-\frac{1}{N}\sum_{j=1}^{N}f(X_{j})|\\&\leq\max_{\emptyset\neq u\subseteq I_{d}}\gamma_{u,d}[\frac{6|u|^{\frac{3}{4}}}{N^{\frac{1}{2}+\frac{1}{2|u|}}}\cdot\sqrt{|u|\ln(N+1)+c(|u|,\epsilon)}+\frac{2c(|u|,\epsilon)}{3N}]
\end{aligned}
\end{equation}
holds with probability at least $\epsilon$, where 
$$c(|u|,\epsilon)=\ln\frac{(2e)^{|u|}}{\sqrt{2\pi |u|}\cdot (1-\epsilon)}.$$
\end{thm}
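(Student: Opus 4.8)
The plan is to pass from the worst-case error to a weighted star discrepancy, split that discrepancy coordinate-wise, and then apply Theorem~\ref{convord1} separately in each of the resulting lower dimensions. By the Hlawka--Zaremba identity recalled above, for the unit ball of $F_{d,1,\gamma}$ one has $e(E_N(f))=D_{N,\gamma}^{*}(X_1,\dots,X_N)$, so it suffices to bound the weighted star discrepancy with probability at least $\epsilon$. Writing out the definition of $D_{N,\gamma}^{*}$ and splitting the supremum over $x\in[0,1]^d$ according to the set $u$ of coordinates on which $x$ is not equal to $1$, I would obtain
\begin{equation*}
D_{N,\gamma}^{*}(X_1,\dots,X_N)=\max_{\emptyset\neq u\subseteq I_d}\gamma_{u,d}\,D_N^{*}\big((X_1)_u,\dots,(X_N)_u\big),
\end{equation*}
where $(X_i)_u\in[0,1]^{|u|}$ is the projection of $X_i$ onto the coordinates in $u$; the coordinates outside $u$ only contribute anchored faces $x_j=1$ and hence do not change the value.

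Next, I would fix a nonempty $u$ with $s=|u|$ and show that the projected point set $\{(X_i)_u\}_{i=1}^{N}$ can be organised as an $s$-dimensional HSFC stratified sample, i.e.\ that there is an equivolume partition of $[0,1]^{s}$ into $N=b^{m}$ cells, each of diameter at most $2\sqrt{s+3}\,N^{-1/s}$, with $(X_i)_u$ uniform and independent on its cell. Granting this, the proof of Theorem~\ref{convord1} applies with $d$ replaced by $s$: the bracketing $\delta$-cover of $[0,1)^{s}$ has cardinality at most $2^{s-1}\tfrac{e^{s}}{\sqrt{2\pi s}}(N+1)^{s}$, the number of cells cut by a $u$-anchored test box is at most $12s\sqrt{s+3}\,N^{1-1/s}$ by the Minkowski-content argument, and Bernstein's inequality (Lemma~\ref{Binequ}) yields, with probability at least $\epsilon$,
\begin{equation*}
D_N^{*}\big((X_1)_u,\dots,(X_N)_u\big)\le \frac{6s^{3/4}}{N^{\frac12+\frac1{2s}}}\sqrt{s\ln(N+1)+c(s,\epsilon)}+\frac{2c(s,\epsilon)}{3N}.
\end{equation*}

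Multiplying by $\gamma_{u,d}$ and taking the maximum over the $2^{d}-1$ nonempty subsets $u$ then gives the asserted estimate, provided the probability is controlled. Since the events attached to different $u$ are dependent, I would intersect them and use a union bound; the auxiliary quantities $b,\lambda_{0},c$ in the hypothesis are exactly what is needed to push $N$ past all the thresholds on $N$ required in the proof of Theorem~\ref{convord1} for every $|u|\le d$, and to absorb the union-bound factor $2^{d}-1$ together with the $\ln N$ terms, so that the clean constant $c(|u|,\epsilon)=\ln\frac{(2e)^{|u|}}{\sqrt{2\pi|u|}(1-\epsilon)}$ survives.

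The crux, and the step I expect to fight with, is the reduction to an $|u|$-dimensional HSFC sample: a coordinate projection of a $d$-dimensional HSFC sample is \emph{a priori} only known to lie in cells of diameter $\le 2\sqrt{d+3}\,N^{-1/d}$, which is too coarse and would only reproduce the exponent $\tfrac12+\tfrac1{2d}$. To reach $\tfrac12+\tfrac1{2|u|}$ one must exploit the recursive, self-similar construction of the Hilbert curve so that the van der Corput digit expansion on $[0,1]$ restricts correctly to the coordinates in $u$ and produces cells that genuinely scale like $N^{-1/|u|}$ — equivalently, so that the number of cells meeting the boundary of a $u$-anchored box is $O(N^{1-1/|u|})$ rather than $O(N^{1-1/d})$. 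The residual difficulty is purely bookkeeping: keeping the overall confidence at least $\epsilon$ after intersecting over exponentially many subsets while still reporting the simple constant above.
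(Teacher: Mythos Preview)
Your overall route is the same as the paper's: Hlawka--Zaremba reduces the worst-case error to the weighted star discrepancy, the weighted discrepancy splits as $\max_{\emptyset\neq u\subseteq I_d}\gamma_{u,d}\,D_N^{*}(\mathbf{P}_{N,d}(u))$, Theorem~\ref{convord1} is invoked in dimension $|u|$ for each nonempty $u$, and a union bound over the $2^d-1$ subsets controls the overall confidence. One point where your sketch diverges from the paper is the bookkeeping for the union bound: the paper does not absorb a flat factor $2^d-1$. It instead assigns to each $u$ the failure probability $(b\lambda^{2}e^{-2\lambda^{2}})^{|u|}$, so that $\sum_{\emptyset\neq u}\mathbb{P}(A_{u,d}^{c})=(1+b\lambda^{2}e^{-2\lambda^{2}})^{d}-1$ by the binomial theorem, and then the particular choice $\lambda=c\max\{1,\sqrt{\ln d/\ln 2}\}$ together with the hypothesis on $c$ forces $2-(1+b\lambda^{2}e^{-2\lambda^{2}})^{d}\ge 2-2^{1/c}\ge\epsilon$. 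The constants $b,\lambda_{0},c$ in the statement are there solely for this probability calculation, not to push $N$ past the thresholds in the proof of Theorem~\ref{convord1}.

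The substantive issue is exactly the one you flag in your last paragraph, and the paper does not resolve it either: its proof simply applies Theorem~\ref{convord1} to the projected point set $\mathbf{P}_{N,d}(u)$ with $d$ replaced by $|u|$, tacitly assuming that the projections form an $|u|$-dimensional HSFC stratified sample with cell diameters $O(N^{-1/|u|})$. Your objection that a coordinate projection of the $d$-dimensional HSFC cells is only guaranteed to have diameter $O(N^{-1/d})$ is correct, and the self-similar structure of the Hilbert curve does not in general repair this---projected cells need not be equivolume, need not tile $[0,1]^{|u|}$, and certainly do not inherit the sharper diameter bound. So the exponent $\tfrac12+\tfrac{1}{2|u|}$ in the conclusion is not justified by either argument; what one can actually extract is the uniform exponent $\tfrac12+\tfrac{1}{2d}$ for every $u$. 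You have put your finger on a genuine gap that the paper's own proof leaves open.
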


\begin{proof}
In theorem \ref{convord1}, we choose the probability $q=\epsilon=1-(b\lambda^{2}e^{-2\lambda^{2}})^{d}$. Which holds for some positive constant $b$ and for all $\lambda\ge\max\{1,b,\lambda_{0}\}$, where $\lambda_{0}$ is constant such that $b\lambda^{2}\leq e^{2\lambda^{2}}$ holds for all $\lambda\ge\lambda_{0}$, and we choose $$\lambda =c\max\{1,\sqrt{(\ln d)/(\ln 2)}\},$$ and $c=\max\{2,b,\lambda_{0}\}$.

Let $$c(|u|,\epsilon)=\ln\frac{(2e)^{|u|}}{\sqrt{2\pi |u|}\cdot (1-\epsilon)}.$$

For a given number of sample points $N$ and dimension $d$, we consider the following set

\begin{align*}
A_{d}:=\{\mathbf{P}_{N,d}\subset[0,1]^{d}:
&D_{N}(\mathbf{P}_{N,d}(u))\leq[\frac{6|u|^{\frac{3}{4}}}{N^{\frac{1}{2}+\frac{1}{2|u|}}}\cdot\sqrt{|u|\ln(N+1)+c(|u|,\epsilon)}+\frac{2c(|u|,\epsilon)}{3N}], \\&
\forall u\subseteq I_{d},u\neq \emptyset\},
\end{align*}
where $\mathbf{P}_{N,d}(u):=\{X_{1}(u),\ldots,X_{N}(u)\}$. In addition, for $u\subseteq I_{d}, u\neq\emptyset$, we define

\begin{align*}
A_{u,d}:=\{\mathbf{P}_{N,d}\subset[0,1)^{d}:&D_{N}(\mathbf{P}_{N,d}(u))\leq[\frac{6|u|^{\frac{3}{4}}}{N^{\frac{1}{2}+\frac{1}{2|u|}}}\cdot\sqrt{|u|\ln(N+1)+c(|u|,\epsilon)}\\&+\frac{2c(|u|,\epsilon)}{3N}]\}.
\end{align*}

Then we have 

\begin{align*}
A_{d}=\bigcap_{\emptyset\neq u\subseteq I_{d}}A_{u,d}.
\end{align*}

Hence,

$$
\begin{aligned}
\mathbb{P}(A_{d})&=\mathbb{P}(\bigcap_{\emptyset\neq u\subseteq I_{d}}A_{u,d})=1-\mathbb{P}(\bigcup_{\emptyset\neq u\subseteq I_{d}}A_{u,d}^{c})\\&\ge1-\sum_{\emptyset\neq u\subseteq I_{d}}\mathbb{P}(A_{u,d}^{c})>1-\sum_{\emptyset\neq u\subseteq I_{d}}(b\lambda^{2}e^{-2\lambda^{2}})^{|u|}
\\&=1-\sum_{u=1}^{d}\dbinom{d}{u}(b\lambda^{2}e^{-2\lambda^{2}})^{u}
=2-(1+b\lambda^{2}e^{-2\lambda^{2}})^{d}.
\end{aligned}
$$

According to $\lambda =c\max\{1,\sqrt{\frac{\ln d}{\ln 2}}\}$ and $c =\max\{2,b,\lambda_{0}\}$, for all $d\ge 2$ and $x=\frac{c^{2}}{\ln 2}>5$, we have $x^{2}\leq 2^{x}\leq d^{x}$ and $\ln d\leq d^{x-1}$. Thus, $x^{2}\ln d\leq d^{2x-1}$, then

$$
\frac{c^{3}\ln d}{(\ln 2)d^{\frac{2c^{2}}{\ln2}}}\leq\frac{\ln 2}{cd}.
$$

Based on this inequality, we obtain a formula that holds for all $d\ge 2$

$$
\begin{aligned}
\mathbb{P}(A_{d})&>2-(1+b\lambda^{2}e^{-2\lambda^{2}})^{d}
\ge2-(1+\frac{c^{3}\ln d}{(\ln 2)d^{\frac{2c^{2}}{\ln2}}})^{d}
\\&\ge2-(1+\frac{\ln 2}{cd})^{d}>2-e^{\frac{\ln 2}{c}}=2-2^{1/c}\ge\epsilon=q\in(0,1).
\end{aligned}
$$

Thus for every $\emptyset\neq u\subseteq I_{d}$, we get

$$
D_{N,\gamma}\left(t_{1}, t_{2}, \ldots t_{N}\right)\leq\max_{\emptyset\neq u\subseteq I_{d}}\gamma_{u,d}[\frac{6|u|^{\frac{3}{4}}}{N^{\frac{1}{2}+\frac{1}{2|u|}}}\cdot\sqrt{|u|\ln(N+1)+c(|u|,\epsilon)}+\frac{2c(|u|,\epsilon)}{3N}]
$$
holds with probability at least $\epsilon$.

The proof is completed.
\end{proof}

\subsection{Strong partition principle for star discrepancy}

For stratified sampling, ~\cite{Doerr2} proves that the classical jittered sampling can obtain a better expected star discrepancy bound than the traditional Monte Carlo sampling. Which means that the jittered sampling is the optimization of Monte Carlo sampling when the number of sampling points is $N=m^d$. Then the problem is how to directly compare the size of the expected star discrepancy under different sampling methods, rather than the optimization of the bound. It refers to how to prove the partition principle of star discrepancy. That is, the expected star discrepancy of the HSFC stratified sampling( here is no longer limited to the jittered case) must be smaller than the expected star discrepancy of simple random sampling.

\begin{thm}\label{uniformnoise0}
For integers $b\ge 1$, $m\ge 1$ and $N=b^m$. For well-defined Hilbert space filling curve samples $X_i\sim U(E_i), i=1,2,\ldots, N=b^m$, and simple random $d-$dimensional samples $Y=\{Y_{1}, Y_{2}, Y_{3}, \ldots, Y_{N}\}$, then

\begin{equation}\label{formula31}
\mathbb{E}(D_{N}^{*}(X))< \mathbb{E}(D_{N}^{*}(Y)).
\end{equation}

\end{thm}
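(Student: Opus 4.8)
The plan is to bound $\mathbb{E}(D_N^*(X))$ from above by means of Theorem~\ref{convord1} and $\mathbb{E}(D_N^*(Y))$ from below by a classical lower bound for the expected star discrepancy of i.i.d.\ points, and then to compare the two. For the first part, note that $D_N^*(P)\le 1$ for any point set, so Theorem~\ref{convord1} already gives, for every $q\in(0,1)$,
$$\mathbb{E}(D_N^*(X))\ \le\ \frac{6d^{3/4}}{N^{1/2+1/(2d)}}\sqrt{d\ln(N+1)+c(d,q)}+\frac{2c(d,q)}{3N}+(1-q).$$
Taking $1-q$ polynomially small in $N$ (e.g.\ $1-q=N^{-d}$, so that $c(d,q)\le d\ln(2eN)$) makes the last term negligible and produces an explicit estimate of the shape $\mathbb{E}(D_N^*(X))\le C(d)\,N^{-1/2-1/(2d)}\sqrt{\ln N}$ with $C(d)$ polynomial in $d$ (it carries the $d^{3/4}$ from Theorem~\ref{convord1} times a further $\sqrt d$ from the $d\ln N$ under the root). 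One could equally integrate the tail estimate \eqref{pb00} against $\mathrm{d}t$ directly; this gives the same order.

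For the lower bound I would use $\mathbb{E}(D_N^*(Y))\ge c_d\,N^{-1/2}$ for simple random samples. A self-contained version follows from testing a single anchored box $[0,x]$ of volume $\tfrac12$: its counting function is $\mathrm{Bin}(N,\tfrac12)$-distributed, whence $\mathbb{E}(D_N^*(Y))\ge\tfrac1N\,\mathbb{E}\,|S_N-N/2|\ge c_0\,N^{-1/2}$ with an absolute constant $c_0$, uniformly in $d$. The sharper bound of order $\sqrt d$ known for i.i.d.\ points could be substituted, but it does not change the structure of the comparison.

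It then remains to check $C(d)\,N^{-1/2-1/(2d)}\sqrt{\ln N}+(1-q)<c_d\,N^{-1/2}$, i.e.\ essentially $C(d)\sqrt{\ln N}<c_d\,N^{1/(2d)}$. This is the crux. The whole gain of HSFC sampling over Monte Carlo is the factor $N^{-1/(2d)}$ (up to the $\sqrt{\ln N}$), so the inequality is true once $N$ is large in terms of $d$; but since $C(d)$ grows polynomially in $d$, the admissible range of $N=b^m$ must be identified and the resulting elementary inequality verified on it — with the smallest sample sizes handled separately (for instance $N=1$ gives $X\stackrel{d}{=}Y$ and only equality). I expect making $C(d)$ and $c_d$ explicit and then clearing this inequality for all relevant $N=b^m$ to be essentially the entire proof.

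If instead one wanted an argument in the spirit of the $L_2$ strong partition principle, valid without a largeness hypothesis on $N$, the route would be to avoid the $\delta$-cover union bound entirely: a one-line Cauchy--Schwarz computation over the strata $E_i$ (using $\lambda(E_i)=1/N$) shows that for \emph{every} test box the variance of the stratified counting function is at most that of the i.i.d.\ one, i.e.\ the increments of the stratified discrepancy process dominate those of the Monte Carlo process in the canonical $L^2$ metric. Passing from this pointwise domination to a comparison of the two \emph{expected suprema} is the genuine obstacle; I would try a chaining / Sudakov--Fernique-type comparison, and I expect this to be harder than the hands-on bound comparison above, which is why I would carry out the latter.
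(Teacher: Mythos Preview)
Your main plan---upper-bound $\mathbb{E}(D_N^*(X))$ via Theorem~\ref{convord1} and lower-bound $\mathbb{E}(D_N^*(Y))$ by $c_0 N^{-1/2}$---is not what the paper does, and it cannot yield the theorem as stated. The statement asserts the strict inequality for \emph{every} $N=b^m$ (so for every $N\ge 2$ once one reads $b\ge 2$ as in Section~\ref{sec:notares}), with no constraint tying $N$ to $d$. Your key comparison $C(d)\sqrt{\ln N}<c_d\,N^{1/(2d)}$ breaks exactly in the regime the theorem is supposed to cover: for any fixed $N\ge 2$ and $d\to\infty$ the right-hand side tends to $1$ while $C(d)$ grows polynomially. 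There is no ``handling the smallest sample sizes separately'' that repairs this, because for each fixed $N$ the inequality must hold in every dimension simultaneously; the difficulty is not a finite list of exceptional $N$'s but the whole range $N\le (C(d)/c_d)^{2d}$. So the rate-comparison route proves at best an asymptotic (large $N$ in terms of $d$) version of the result.

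The paper follows essentially your second sketch. It first establishes the pointwise variance inequality
\[
\mathrm{Var}\Big(\sum_{n=1}^N \mathbf{1}_{[0,x]}(X_n)\Big)<\mathrm{Var}\Big(\sum_{n=1}^N \mathbf{1}_{[0,x]}(Y_n)\Big)
\]
for every anchored box (the equal-volume Cauchy--Schwarz computation you mention, together with a short argument ruling out equality), and then tries to upgrade this to a comparison of expected suprema. It does \emph{not} invoke Sudakov--Fernique. Instead it reruns the Bernstein-plus-$\delta$-cover machinery from the proof of Theorem~\ref{convord1} for both $X$ and $Y$, but keeps the variance parameter abstract---$\Sigma_0^2$ for $X$ and $\Sigma_1^2$ for $Y$---rather than replacing it by the crude bound $|L|\le 12d\sqrt{d+3}\,N^{1-1/d}$. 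From $\Sigma_0<\Sigma_1$ it argues that a given tail level $(\sqrt{2}\Sigma_0+1)A(d,q_0,N)/N$ is exceeded by $D_N^*(Y)$ with a strictly larger probability than by $D_N^*(X)$, and then integrates $\mathbb{P}(D_N^*\ge t)$ in $t$ via the change of variables $t\leftrightarrow q_0$. The crucial point, whatever one makes of the rigor of that last step, is that the comparison is carried out at the level of the Bernstein tail bound with the two variances, not at the level of the final convergence rates $N^{-1/2-1/(2d)}$ versus $N^{-1/2}$; this is why no largeness hypothesis on $N$ enters. If you want to salvage your write-up, you should drop the rate comparison and work directly with the variance inequality as the driver.
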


\begin{proof}
For arbitrary test set $R=[0,x)$, we unify a label $W=\{W_1,W_2,\ldots,W_N\}$. Which is the sampling point sets formed by different sampling models, and we consider the following discrepancy function,

\begin{equation}\label{dispfunc1}
\Delta_{\mathscr{P}}(x)=\frac{1}{N}
\sum_{n=1}^{N}\mathbf{1}_{R}(W_n)-\lambda(R).
\end{equation}

For an equivolume partition $\Omega=\{\Omega_1,\Omega_2,\ldots,\Omega_N\}$, we divide the test set $R$ into two parts, one being the disjoint union of $\Omega_{i}$ completely contained by $R$, and the other being the union of the remaining parts which are the intersections of some $\Omega_{j}$ and $R$, i.e.,

\begin{equation}\label{Rtp1}
R=\bigcup_{i\in I_0}\Omega_{i}\cup\bigcup_{j\in J_0}(\Omega_{j}\cap R),
\end{equation}
where $I_0,J_0$ are two index sets.

We set $$T=\bigcup_{j\in J_0}(\Omega_{j}\cap R).$$

Furthermore, for the equivolume partition $\Omega=\{\Omega_1, \Omega_2, \ldots, \Omega_N\}$ of $[0,1]^{d}$ and the corresponding stratified sampling set $P_{\Omega}$, we have

\begin{equation}\label{for4d4}
\begin{aligned}
\text{Var}(\sum_{n=1}^{N}\mathbf{1}_{R}(P_{\Omega}))&=\sum_{i=1}^{N}\frac{|\Omega_i\cap[0,x]|}{|\Omega_i|}(1-\frac{|\Omega_i\cap[0,x]|}{|\Omega_i|})\\&=N|[0,x]|-N^2\sum_{i=1}^{N}(|\Omega_i\cap[0,x]|)^2.
\end{aligned}
\end{equation}

For sampling sets $Y=\{Y_1,Y_2,\ldots,Y_N\}$ and $X=\{X_1,X_2,\ldots,X_N\}$, we have
\begin{equation}
\text{Var}(\sum_{n=1}^{N}\mathbf{1}_{R}(X))=N|[0,x]|-N^2\sum_{i=1}^{N}(|\Omega^{*}_{|,i}\cap[0,x]|)^2,
\end{equation}
and 
\begin{equation}
\text{Var}(\sum_{n=1}^{N}\mathbf{1}_{R}(Y))=N|[0,x]|-N^2|[0,x]|^2.
\end{equation}

Hence, we have

\begin{equation}\label{varcomp}
\text{Var}(\sum_{n=1}^{N}\mathbf{1}_{R}(X))\leq \text{Var}(\sum_{n=1}^{N}\mathbf{1}_{R}(Y)).
\end{equation}

Now we exclude the case of equality in \eqref{varcomp}, which means that the following formula holds,

\begin{equation}
N|\Omega^{*}_{|,i}\cap[0,x]|=|[0,x]|,
\end{equation}
for $i=1,2,\ldots,N$ and for almost all $x\in [0,1]^{d}$.

Hence, 

\begin{equation}
\int_{[0,x]}\mathbf{1}_{\Omega^{*}_{|,i}}(y)dy=\int_{[0,x]}\frac{1}{N}dy,
\end{equation}
for almost all $x\in[0,1]^{d}$ and all $i=1,2,\ldots,N$, which implies $\mathbf{1}_{\Omega^{*}_{|,i}}=\frac{1}{N}$. This is not impossible for $N\ge 2.$

For set $T$, we have

\begin{equation}
\text{Var}(\frac{1}{N}
\sum_{n=1}^{N}\mathbf{1}_{T}(X_n))=\sum_{i=1}^{|J_0|}\frac{|\Omega^{*}_{i,|}\cap[0,x]|}{|\Omega^{*}_{i,|}|}(1-\frac{|\Omega^{*}_{i,|}\cap[0,x]|}{|\Omega^{*}_{i,|}|}).
\end{equation}

The same analysis for the set $T$ as $R$, we have

\begin{equation}\label{varcom000}
   \text{Var}(\frac{1}{N}
\sum_{n=1}^{N}\mathbf{1}_{T}(X_n))<\text{Var}(\frac{1}{N}
\sum_{n=1}^{N}\mathbf{1}_{T}(Y_n)).
\end{equation}

For the test set $R=[0,x)$, we choose $R_0=[0,y)$ and $R_1=[0,z)$ such that $y\leq x\leq z$ and $\lambda(R_1)-\lambda(R_0)\leq \frac{1}{N}$, then ($R_0$,$R_1$) form the $\frac{1}{N}-$covers. For $R_0$ and $R_1$, we can divide them into two parts as we did for \eqref{Rtp1}. Let $$T_0=\bigcup_{j\in J_0}(\Omega_{j}\cap R_0),$$ and $$T_1=\bigcup_{j\in J_0}(\Omega_{j}\cap R_1).$$  We have the same result for $T_0$ and $T_1$. To unify the two cases $T_0$ and $T_1$ (because $T_0$ and $T_1$ are generated from two test sets with the same cardinality, and the cardinality is the covering numbers), we consider a set $R'$ which can be divided into two parts

\begin{equation}\label{Rpr1}
R'=\bigcup_{k\in K}\Omega_{k}\cup\bigcup_{l\in L}(\Omega_{l}\cap R'),
\end{equation}
where $K,L$ are two index sets. In addition, we set the cardinality of $R^{'}\subset[0,1)^{d}$ to be at most $2^{d-1}\frac{e^{d}}{\sqrt{2\pi d}}(N+1)^{d}$ (the $\delta-$covering numbers, where we choose $\delta=\frac{1}{N}$), and we let

$$
T'=\bigcup_{l\in L}(\Omega_{l}\cap R').
$$

We define new random variables $\chi_{j}, 1\leq j\leq |L|$, as follows

$$
    \chi_{j}=\left\{
\begin{aligned}
&1, W_{j}\in \Omega_{j}\cap R',\\&
0, otherwise.
\end{aligned}
\right.
$$
Then,

\begin{equation}\label{murmut0}
\begin{aligned}
&N\cdot D^{*}_{N}\left(W_{1}, W_{2}, \ldots, W_{N};R'\right)\\&=N\cdot D^{*}_{N}\left(W_{1}, W_{2}, \ldots, W_{N};T'\right)\\&=|\sum_{j=1}^{|L|}\chi_{j}-N(\sum_{j=1}^{|L|}\lambda(\Omega_{j}\cap T'))|.
\end{aligned}
\end{equation}

Since

$$
\mathbb{P}(\chi_{j}=1)=\frac{\lambda(\Omega_{j}\cap T')}{\lambda(\Omega_{j})}=N\cdot\lambda(\Omega_{j}\cap T'),
$$
we get

\begin{equation}\label{echi0}
    \mathbf{E}(\chi_{j})=N\cdot\lambda(\Omega_{j}\cap T').
\end{equation}

Thus, from \eqref{murmut0} and \eqref{echi0}, we obtain

\begin{equation}\label{Rprime00}
N\cdot D^{*}_{N}\left(W_{1}, W_{2}, \ldots, W_{N};R'\right)=|\sum_{j=1}^{|L|}(\chi_{j}-\mathbb{E}(\chi_{j}))|.
\end{equation}

Let $$\sigma_{j}^{2}=\mathbb{E}(\chi_{j}-\mathbb{E}(\chi_{j}))^{2}, \Sigma=(\sum_{j=1}^{|L|}\sigma_{j}^{2})^{\frac{1}{2}}.$$

Therefore, from Lemma \ref{Binequ}, for every $R'$, we have,

$$
\mathbb{P}\left
(\Big|\sum_{j=1}^{|L|}(\chi_{j}-\mathbb{E}(\chi_{j}))\Big|>\lambda\right)\leq
2\cdot\exp(-\frac{\lambda^{2}}{2\Sigma^{2}+\frac{2\lambda}{3}}).
$$

Let $\mathscr{B}=\bigcup\limits_{R'}\left(\Big|\sum_{j=1}^{|L|}(\chi_{j}-\mathbb{E}(\chi_{j}))|>\lambda\right),$ then using $\delta-$cover numbers, we have

\begin{equation}\label{pb000}
\mathbb{P}(\mathscr{B})\leq (2e)^{d}\cdot\frac{1}{\sqrt{2\pi d}}\cdot(N+1)^{d}\cdot
\exp(-\frac{\lambda^{2}}{2\Sigma^{2}+\frac{2\lambda}{3}}).
\end{equation}

Combining with \eqref{Rprime00}, we get

\begin{equation}\label{punrp1}
\begin{aligned}
&\mathbb{P}\Big(\bigcup_{R'}\left(N\cdot D_{N}^{*}\left(W_{1}, W_{2}, \ldots, W_{N};R'\right)>\lambda\right)\Big)\\&\leq (2e)^{d}\cdot\frac{1}{\sqrt{2\pi d}}\cdot(N+1)^{d}\cdot
\exp(-\frac{\lambda^{2}}{2\Sigma^{2}+\frac{2\lambda}{3}}).
\end{aligned}
\end{equation}

For point sets $Y$ and $X$, if we let $$\Sigma_{0}^2=\text{Var}(\sum_{n=1}^{N}\mathbf{1}_{T'}(X_n)), \Sigma_{1}^2=\text{Var}(\sum_{n=1}^{N}\mathbf{1}_{T'}(Y_n)).$$

Then \eqref{varcom000} implies

$$\Sigma_{0}^2<\Sigma_{1}^2.$$

Besides, as \eqref{punrp1}, we have

\begin{equation}\label{purpy2}
\begin{aligned}
&\mathbb{P}\Big(\bigcup_{R'}\left(N\cdot D_{N}^{*}\left(X_{1}, X_{2}, \ldots, X_{N};R'\right)>\lambda\right)\Big)\\&\leq (2e)^{d}\cdot\frac{1}{\sqrt{2\pi d}}\cdot(N+1)^{d}\cdot
\exp(-\frac{\lambda^{2}}{2\Sigma_0^{2}+\frac{2\lambda}{3}}),
\end{aligned}
\end{equation}
and

\begin{align*}
&\mathbb{P}\Big(\bigcup_{R'}\left(N\cdot D_{N}^{*}\left(Y_{1}, Y_{2}, \ldots, Y_{N};R'\right)>\lambda\right)\Big)\\&\leq (2e)^{d}\cdot\frac{1}{\sqrt{2\pi d}}\cdot(N+1)^{d}\cdot
\exp(-\frac{\lambda^{2}}{2\Sigma_1^{2}+\frac{2\lambda}{3}}).
\end{align*}

Suppose $A(d,q,N)=d\ln(2e)+d\ln(N+1)-\frac{\ln(2\pi d)}{2}-\ln(1-q)$, and we substitute

$$
\lambda=\sqrt{2\Sigma_0^2\cdot A(d,q,N)+\frac{A^2(d,q,N)}{9}}+\frac{A(d,q,N)}{3}
$$
into \eqref{purpy2}, then we have

\begin{equation}\label{punionrp}
\mathbb{P}\Big(\bigcup_{R'}\left(N\cdot D_{N}^{*}\left(X_{1}, X_{2}, \ldots, X_{N};R'\right)>\lambda\right)\Big)\leq 1-q.
\end{equation}

Therefore, from \eqref{punionrp}, it can be easily verified that

\begin{equation}\label{maxdnstar}
\max_{R_i,i=0,1}
D_{N}^{*}(X_{1},X_{2},\ldots,X_{N};R_{i})\leq \frac{\sqrt{2\Sigma_0^2\cdot A(d,q,N)+\frac{A^2(d,q,N)}{9}}}{N}+\frac{A(d,q,N)}{3N}
\end{equation}
holds with probability at least $q$.

Combined with $\delta-$covering numbers(where $\delta=\frac{1}{N}$), we get,

\begin{equation}\label{dnsy1}
\begin{aligned}
    D_{N}^{*}(X)&\leq \frac{\sqrt{2\Sigma_0^2\cdot A(d,q,N)+\frac{A^2(d,q,N)}{9}}}{N}+\frac{A(d,q,N)+3}{3N}\\&
    \leq (\sqrt{2}\cdot\Sigma_0+1)\frac{A(d,q,N)}{N}
\end{aligned}
\end{equation}
holds with probability at least $q$. The last inequality in \eqref{dnsy1} holds because $A(d,q,N)\ge 3$ holds for all $q\in (0,1)$.

Same analysis with point set $Y$, we have

\begin{equation}\label{DNsgm1}
\begin{aligned}
    D_{N}^{*}(Y)&\leq \frac{\sqrt{2\Sigma_1^2\cdot A(d,q,N)+\frac{A^2(d,q,N)}{9}}}{N}+\frac{A(d,q,N)+3}{3N}\\&
    \leq (\sqrt{2}\cdot\Sigma_1+1)\frac{A(d,q,N)}{N}
\end{aligned}
\end{equation}
holds with probability at least $q$.

Now we fix a probability value $q_0\in(0,1)$ in \eqref{dnsy1}, i.e. we assume that \eqref{dnsy1} holds with probability exactly $q_0$, where $q_0\in [q,1)$. If we choose this $q_0$ in \eqref{DNsgm1}, we have

$$
    D_{N}^{*}(Y)\leq (\sqrt{2}\cdot\Sigma_1+1)\frac{A(d,q_0,N)}{N},
$$
holds with probability $q_0.$

Therefore from $\Sigma_0<\Sigma_1$, we obtain,

\begin{equation}\label{dnsxq1}
     D_{N}^{*}(Y)\leq (\sqrt{2}\cdot\Sigma_0+1)\frac{A(d,q_0,N)}{N}
\end{equation}
holds with probability $q_1,$ where $0<q_1< q_0$.

We use the following fact to estimate the expected star discrepancy

\begin{equation}\label{EDNSX}
    \mathbb{E}[D^{*}_{N}(W)]=\int_{0}^{1}\mathbb{P}(D^{*}_{N}(W)\ge t)dt,
\end{equation}
where $D^{*}_{N}(W)$ denotes the star discrepancy of point set $W$.

By substituting $q_0$ into \eqref{dnsy1}, we obtain

\begin{equation}\label{DNsY}
D_{N}^{*}\left(X\right)\leq (\sqrt{2}\cdot\Sigma_0+1)\frac{A(d,q_0,N)}{N}
\end{equation}
holds with probability $q_0$. Then \eqref{DNsY} is equivalent to

$$
    \mathbb{P}\big(D_{N}^{*}\left(X\right)\ge (\sqrt{2}\cdot\Sigma_0+1)\frac{A(d,q_0,N)}{N}\big)=1-q_0.
$$

Now releasing $q_0$ and let

\begin{equation}\label{tsig0}
t=(\sqrt{2}\cdot\Sigma_0+1)\frac{A(d,q_0,N)}{N},
\end{equation}

\begin{equation}\label{c0sig0}
C_0(\Sigma_0,N)=\frac{\sqrt{2}\cdot\Sigma_0+1}{N},
\end{equation}
and
\begin{equation}\label{c1sig0}
C_1(d,\Sigma_0,N)=\frac{\sqrt{2}\cdot\Sigma_0+1}{N}\cdot(d\ln(2e)+d\ln(N+1)-\frac{\ln(2\pi d)}{2}).
\end{equation}

Then

\begin{equation}\label{tc1sig0}
    t=C_1(d,\Sigma_0,N)-C_0(\Sigma_0,N)\ln (1-q_0).
\end{equation}

Thus from \eqref{EDNSX} and $q_0\in[q,1)$, we have

\begin{equation}\label{ednstarz}
\begin{aligned}
&\mathbb{E}[D^{*}_{N}(X)]=\int_{0}^{1}\mathbb{P}(D^{*}_{N}(X)\ge t)dt\\&=\int_{1-e^{\frac{C_1(d,\Sigma_0,N)}{C_0(\Sigma_0,N)}}}^{1-e^{\frac{C_1(d,\Sigma_0,N)-1}{C_0(\Sigma_0,N)}}}\mathbb{P}\Big(D^{*}_{N}(X)\ge (\sqrt{2}\cdot\Sigma_0+1)\frac{A(d,q_0,N)}{N}\Big)\cdot C_0(\Sigma_0,N)\cdot\frac{1}{1-q_0}dq_0
\\&=\int_{q}^{1-e^{\frac{C_1(d,\Sigma_0,N)-1}{C_0(\Sigma_0,N)}}}C_0(\Sigma_0,N)\cdot\frac{1-q_0}{1-q_0}dq_0.
\end{aligned}
\end{equation}

Furthermore, from \eqref{dnsxq1}, we have

$$
    \mathbb{P}\big(D_{N}^{*}\left(Y\right)\ge (\sqrt{2}\cdot\Sigma_0+1)\frac{A(d,q_0,N)}{N}\big)=1-q_1.
$$

Following the steps from \eqref{tsig0} to \eqref{tc1sig0}, we obtain,

$$
    \mathbb{E}[D^{*}_{N}(Y)]=\int_{0}^{1}\mathbb{P}(D^{*}_{N}(Y)\ge t)dt=\int_{q}^{1-e^{\frac{C_1(d,\Sigma_0,N)-1}{C_0(\Sigma_0,N)}}}C_0(\Sigma_0,N)\cdot\frac{1-q_1}{1-q_0}dq_0.
$$

From $q_1< q_0,$ we obtain

$$
    \frac{1-q_1}{1-q_0}>\frac{1-q_0}{1-q_0}.
$$

Hence,

\begin{equation}\label{ewycom1}
    \mathbb{E}(D_{N}^{*}(X))<\mathbb{E}(D_{N}^{*}(Y)).
\end{equation}
\end{proof}

\begin{cor}\label{aep}
For any equivolume partitions $\Omega=\{\Omega_1,\Omega_2,\ldots,\Omega_N\}$ and the corresponding stratified sampling set $P_{\Omega}$. For simple random $d-$dimensional samples $Y=\{Y_{1}, Y_{2}, Y_{3}, \ldots, Y_{N}\}$, we have

\begin{equation}
\mathbb{E}(D_{N}^{*}(P_{\Omega}))< \mathbb{E}(D_{N}^{*}(Y)).
\end{equation}
\end{cor}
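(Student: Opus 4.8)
The plan is to observe that the proof of Theorem~\ref{uniformnoise0} never uses any feature of Hilbert space filling curve sampling beyond two facts: (i) $P_\Omega$ is a stratified sample drawn from an equivolume partition, with one point uniform on each cell, and (ii) each cell has Lebesgue measure $1/N$. Consequently the entire argument transfers after replacing the HSFC cells $\{E_i\}$ by the arbitrary equivolume cells $\{\Omega_i\}$ and the sample $X$ by $P_\Omega$. I would therefore present the proof as a direct adaptation, highlighting only the places where the partition enters.

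First, the variance identity \eqref{for4d4} holds with the $\Omega_i$ in place of the HSFC cells by the same computation, since it only uses $\lambda(\Omega_i)=1/N$ and independence of the per-cell draws. The inequality $\mathrm{Var}(\sum_n\mathbf 1_R(P_\Omega))\le\mathrm{Var}(\sum_n\mathbf 1_R(Y))$, the analogue of \eqref{varcomp}, then follows from Cauchy--Schwarz (equivalently, convexity of $t\mapsto t^2$): $\sum_{i=1}^N\lambda(\Omega_i\cap[0,x])^2\ge\frac1N\big(\sum_{i=1}^N\lambda(\Omega_i\cap[0,x])\big)^2=\frac1N\lambda([0,x])^2$. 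Strictness, the analogue of \eqref{varcom000}, is obtained exactly as in Theorem~\ref{uniformnoise0}: equality would force $N\lambda(\Omega_i\cap[0,x])=\lambda([0,x])$ for almost every $x$ and every $i$, hence $\mathbf 1_{\Omega_i}\equiv1/N$ a.e., which is impossible for $N\ge2$.

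Second, I would run the $\delta$-cover reduction (choosing $\delta=1/N$ and using Lemma~\ref{cvdeldb}) together with the Bernstein estimate of Lemma~\ref{Binequ} applied to the boundary indicators $\chi_j$ on a generic test box $R'$, exactly as in \eqref{murmut0}--\eqref{punionrp}, now with $\Sigma_0^2=\mathrm{Var}(\sum_n\mathbf 1_{T'}(P_\Omega))$ and $\Sigma_1^2=\mathrm{Var}(\sum_n\mathbf 1_{T'}(Y))$, where $\Sigma_0^2<\Sigma_1^2$ by the previous step. This yields, for any $q\in(0,1)$, the bounds $D_N^*(P_\Omega)\le(\sqrt2\,\Sigma_0+1)A(d,q,N)/N$ and $D_N^*(Y)\le(\sqrt2\,\Sigma_1+1)A(d,q,N)/N$ with probability at least $q$, and hence (as in \eqref{dnsxq1}) $D_N^*(Y)\le(\sqrt2\,\Sigma_0+1)A(d,q_0,N)/N$ holds only with some probability $q_1<q_0$. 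Plugging these tail statements into the layer-cake formula \eqref{EDNSX} and performing the same change of variables $t\mapsto q_0$ used in \eqref{ednstarz}, the two expectations are compared through the ratio $(1-q_1)/(1-q_0)>1$, giving $\mathbb E(D_N^*(P_\Omega))<\mathbb E(D_N^*(Y))$.

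The only point requiring care, and the natural place to slip, is checking that no step of Theorem~\ref{uniformnoise0} secretly invoked the HSFC diameter bound \eqref{diaest} or a Minkowski-content count of boundary cells; it does not, because that proof retains the exact variances $\Sigma_0^2,\Sigma_1^2$ rather than bounding the number of boundary cells. Thus the argument is genuinely partition-agnostic and the corollary follows immediately.
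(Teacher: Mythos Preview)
Your proposal is correct and follows essentially the same route as the paper: compute the two variances for a general equivolume partition and for simple random sampling, verify the strict inequality (the paper leaves the Cauchy--Schwarz step implicit, you make it explicit), and then appeal to the machinery of Theorem~\ref{uniformnoise0}, which---as you rightly stress---never invokes the HSFC diameter bound \eqref{diaest} or any Minkowski-content boundary count. The paper's proof is just a terse version of yours: it writes down the two variance formulas and says ``According to the proof process of Theorem~\ref{uniformnoise0}'' for the remainder.
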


\begin{proof}
   For $\Omega=\{\Omega_1, \Omega_2, \ldots, \Omega_N\}$ and $P_{\Omega}$, it follows that 

\begin{equation}
\begin{aligned}
\text{Var}(\sum_{n=1}^{N}\mathbf{1}_{[0,x]}(P_{\Omega}))&=\sum_{i=1}^{N}\frac{|\Omega_i\cap[0,x]|}{|\Omega_i|}(1-\frac{|\Omega_i\cap[0,x]|}{|\Omega_i|})\\&=N|[0,x]|-N^2\sum_{i=1}^{N}(|\Omega_i\cap[0,x]|)^2.
\end{aligned}
\end{equation}

For simple random sampling set $Y=\{Y_1,Y_2,\ldots,Y_N\}$, we have

\begin{equation}
\text{Var}(\sum_{n=1}^{N}\mathbf{1}_{[0,x]}(Y))=N|[0,x]|-N^2|[0,x]|^2.
\end{equation}

According to the proof process of Theorem \ref{uniformnoise0}, we have

\begin{equation}
   \text{Var}(\frac{1}{N}
\sum_{n=1}^{N}\mathbf{1}_{[0,x]}(P_{\Omega}))<\text{Var}(\frac{1}{N}
\sum_{n=1}^{N}\mathbf{1}_{T}(Y_n)).
\end{equation}

and the desired result.
\end{proof}

\begin{rem}
In fact, Corollary \ref{aep} extends the result of Theorem \ref{uniformnoise0} to the more general case of equivolume partitions. The following result is a clear example.
\end{rem}

\textbf{An example:} For a grid-based equivolume partition in two dimensions, the two squares in the upper right corner are merged to form a rectangle $$I=[a_1,a_1+2b]\times [a_2,a_2+b], $$where $a_1,a_2,b$ are three positive constants. 

For the merged rectangle $I$, we use a series of straight-line partitions to divide the rectangle into two equal-volume parts, which will be converted to a one-parameter model if we set the angle between the dividing line and the horizontal line across the center $\theta$, where we suppose that $0\leq\theta\leq\frac{\pi}{2}$. From simple calculations, we can conclude that the arbitrary straight line must pass through \textbf{the center of the rectangle}. For convenience of notation, we set this partition model $\Omega_{\sim}=(\Omega_{1,\sim},\Omega_{2,\sim},Q_3,\ldots,Q_{N})$ in the two-dimensional case, see Figure \ref{fig6}.

\begin{figure}[H]
\centering
\includegraphics[width=0.30\textwidth]{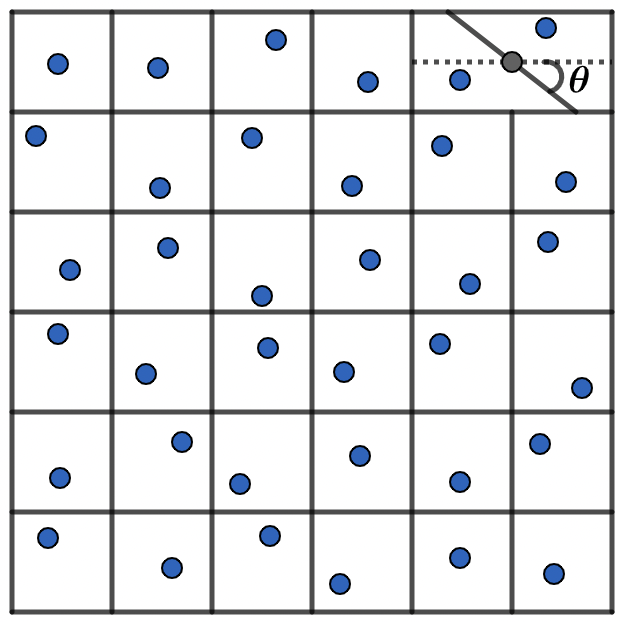}
\caption{\label{fig6}A class of equivolume partitions for two dimensions}
\end{figure}

Now, we consider the $d-$dimensional cuboid
\begin{equation}
    I_d=I\times\prod_{i=3}^{d}[a_i,a_i+b]
\end{equation} and its partitions $\Omega'_{\sim}=(\Omega'_{1,\sim},\Omega'_{2,\sim})$ into two closed, convex bodies with

\begin{equation}
    \Omega'_{1,\sim}= \Omega_{1,\sim}\times \prod_{i=3}^{d}[a_i,a_i+b].
\end{equation}

We choose $a_1=\frac{m-2}{m}, a_2=\frac{m-1}{m}, b=\frac{1}{m}$ in $\Omega'_{1,\sim}$, which is denoted by $\Omega^{*}_{1,\sim}$, and obtain

\begin{equation}\label{omga1}
\Omega^{*}_{\sim}=(\Omega^{*}_{1,\sim},\Omega^{*}_{2,\sim},Q_3 \ldots,Q_{N}).
\end{equation}

Then, we have the following result.

\begin{cor}\label{aneqopart}
For any $0\leq\theta\leq\frac{\pi}{2}$. For the class of equivolume partitions $\Omega^{*}_{\sim}=(\Omega^{*}_{1,\sim},\Omega^{*}_{2,\sim},Q_3 \ldots,Q_{N})$ and the corresponding stratified sampling set $P_{\Omega^{*}_{\sim}}$. For simple random samples $Y=\{Y_{1}, Y_{2}, Y_{3}, \ldots, Y_{N}\}$, we have

\begin{equation}
\mathbb{E}(D_{N}^{*}(P_{\Omega^{*}_{\sim}}))< \mathbb{E}(D_{N}^{*}(Y)).
\end{equation}
\end{cor}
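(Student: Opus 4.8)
The plan is to reduce Corollary \ref{aneqopart} to Corollary \ref{aep} by verifying that the proposed family $\Omega^{*}_{\sim}=(\Omega^{*}_{1,\sim},\Omega^{*}_{2,\sim},Q_3,\ldots,Q_{N})$ is in fact an \emph{equivolume} partition of $[0,1]^{d}$. Indeed, the only non-trivial point is that the two ``exceptional'' cells $\Omega^{*}_{1,\sim}$ and $\Omega^{*}_{2,\sim}$ each have Lebesgue measure $\frac{1}{N}$: the remaining cells $Q_3,\ldots,Q_{N}$ are the original grid cubes of side length $\frac{1}{m}$, hence of volume $m^{-d}=\frac{1}{N}$. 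Since $\Omega^{*}_{1,\sim}$ and $\Omega^{*}_{2,\sim}$ arise by slicing the cuboid $I_d$ (of volume $2\cdot\frac{1}{m}\cdot\frac{1}{m}\cdot\prod_{i=3}^{d}\frac{1}{m}=\frac{2}{N}$) through its center by a hyperplane that contains the center, each piece has volume $\frac{1}{N}$; this is the elementary point-symmetry fact already noted in the text (``the arbitrary straight line must pass through the center of the rectangle''), extended to the $d$-dimensional prism $\Omega'_{1,\sim}=\Omega_{1,\sim}\times\prod_{i=3}^{d}[a_i,a_i+b]$ by Fubini. One also checks that the cells cover $[0,1]^{d}$ and overlap only on measure-zero boundaries, so $\Omega^{*}_{\sim}$ is a genuine equivolume partition with $N$ cells.

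Having established that, I would simply invoke Corollary \ref{aep} with $\Omega=\Omega^{*}_{\sim}$ and $P_{\Omega}=P_{\Omega^{*}_{\sim}}$, which immediately yields
\begin{equation*}
\mathbb{E}(D_{N}^{*}(P_{\Omega^{*}_{\sim}}))<\mathbb{E}(D_{N}^{*}(Y)).
\end{equation*}
Concretely, the chain of inequalities \eqref{for4d4}--\eqref{varcomp} in the proof of Theorem \ref{uniformnoise0} only uses that the cells partition $[0,1]^{d}$ with equal volume $\frac{1}{N}$, and the strictness comes from excluding the degenerate case $N|\Omega^{*}_{i}\cap[0,x]|=|[0,x]|$ a.e., which forces $\mathbf{1}_{\Omega^{*}_{i}}\equiv\frac{1}{N}$ and is impossible for $N\ge 2$; none of this is sensitive to the particular shape of the cells, only to their measures. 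Hence the variance comparison \eqref{varcom000} holds verbatim for $T'=\bigcup_{l\in L}(\Omega^{*}_{l,\sim}\cap R')$, and the rest of the argument (the Bernstein tail bound \eqref{pb000}--\eqref{punionrp} together with the integral representation \eqref{EDNSX} of the expected discrepancy and the bookkeeping with $q_0,q_1$) transfers without change.

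The main obstacle, such as it is, is the bookkeeping for the exceptional cells rather than any real analytic difficulty: one must be careful that $\Omega^{*}_{1,\sim}$ and $\Omega^{*}_{2,\sim}$ are still \emph{convex} (so that, if one wanted the Minkowski-content estimates, $\mathscr{M}(\partial\Omega^{*}_{i,\sim})\le 2d$ still applies) and that the choice $a_1=\frac{m-2}{m}$, $a_2=\frac{m-1}{m}$, $b=\frac{1}{m}$ places the merged cuboid flush against the corner of $[0,1]^{d}$ so that the union with the untouched cubes $Q_3,\ldots,Q_N$ really exhausts the cube. For every angle $\theta\in[0,\tfrac{\pi}{2}]$ the slicing hyperplane through the center of the $2$-dimensional face $I$ (extended trivially in the coordinates $3,\ldots,d$) produces two congruent convex pieces by central symmetry of the cuboid about its center, so convexity and the volume split are uniform in $\theta$. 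Once these routine verifications are in place, Corollary \ref{aneqopart} is an immediate specialization of Corollary \ref{aep}, and the proof is complete.
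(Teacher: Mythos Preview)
Your proposal is correct and matches the paper's approach exactly: the paper presents Corollary~\ref{aneqopart} explicitly as ``a clear example'' of Corollary~\ref{aep}, with no separate proof given, so the intended argument is precisely the specialization you describe. Your additional care in verifying that $\Omega^{*}_{\sim}$ is a genuine equivolume partition (via the central-symmetry argument for the two exceptional cells) simply makes explicit what the paper leaves implicit.
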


\subsection{Sampling theorem in Hilbert space}

The sampling problem attempts to reconstruct or approximate a function $f$ from its sampled values on the sampling set \cite{BG2}. To solve this problem, it is necessary to specify the function space, since the theoretical and numerical analysis may differ for different function spaces. Common sampling function spaces are: (i)band-limited signals \cite{BG3}; (ii)signals in some shift-invariant spaces \cite{AG1,FX2014,FX2019,GS1,XL1,JBY1}; (iii)non-uniform splines \cite{swc1,swc2}; (iv)sum of some of the above signals \cite{sqy1}, and so on.

Well-posedness of the sampling problem implies that the following inequalities must hold:

\begin{equation}\label{samineq1}
    A\|f\|_{2}^{2}\leq \sum_{j=1}^{N}|f(x_j)|^2\leq  B\|f\|_{2}^{2},
\end{equation}
where $A$ and $B$ are some positive constants. The inequalities \eqref{samineq1} indicate that a small change in a sampled value $f(x_j)$ causes only a small change in $f$. This implies that the sampling is stable, or equivalently, that the reconstruction of $f$ from its samples is continuous. Due to the randomness in the choice of the sampling points, our goal is to choose the following probability estimate:

\begin{equation}
    \mathbb{P}(A\|f\|_{2}^{2}\leq \sum_{j=1}^{N}|f(x_j)|^2\leq  B\|f\|_{2}^{2})\ge 1-\epsilon,
\end{equation}
where $\epsilon>0$ can be taken as arbitrarily small.

In this subsection, we study the problem of stratified sampling sets, and give explicit bounded expressions for a set of sampling inequalities in general Hilbert spaces.

\begin{thm}
For integers $b\ge 1$, $m\ge 1$ and $N=b^m$. Then for well-defined $d-$dimensional Hilbert space filling curve samples $X_i\sim U(E_i), i=1,2,\ldots, N=b^m$, we have

\begin{align*}
(1-\frac{D_{N}^{*}(X)}{\gamma})\|f\|_{L^2}^2\leq \frac{1}{N}\sum_{j=1}^{N}f^{2}(x_j)\leq (\frac{D_{N}^{*}(X)}{\gamma}+1)\|f\|_{L^2}^2,
\end{align*}
holds with probability at least $1-\epsilon$, where $0<\gamma\leq 1$, $f$ belongs to Hilbert space.
\end{thm}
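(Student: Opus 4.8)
The plan is to derive the two‑sided bound from a single application of the Koksma--Hlawka inequality to the function $g:=f^{2}$, followed by a choice of $\gamma$ that absorbs its Hardy--Krause variation. First I would note that, since $f$ lies in the given Hilbert space of functions on $[0,1]^{d}$ (a space for which point evaluation is well defined and the mixed first derivatives are integrable, so that $f^{2}$ has finite variation $V(f^{2})$ in the sense of Hardy and Krause), the quantity $\frac1N\sum_{j=1}^{N}f^{2}(X_{j})$ is the equal‑weight quadrature rule for $g$ at the HSFC nodes $X_{1},\dots,X_{N}$, and $\|f\|_{L^{2}}^{2}=\int_{[0,1]^{d}}f^{2}(x)\,dx=\int_{[0,1]^{d}}g(x)\,dx$. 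Hence \eqref{K-H} gives
\[
\Bigl|\frac1N\sum_{j=1}^{N}f^{2}(X_{j})-\|f\|_{L^{2}}^{2}\Bigr|\;\le\;D_{N}^{*}(X)\,V(f^{2}).
\]

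Next, assuming $f\not\equiv 0$ (the case $f\equiv 0$ being trivial), I would set $\gamma:=\min\{1,\ \|f\|_{L^{2}}^{2}/V(f^{2})\}$, so that $0<\gamma\le 1$ and $V(f^{2})\le\|f\|_{L^{2}}^{2}/\gamma$. Substituting this into the displayed estimate yields
\[
\Bigl|\frac1N\sum_{j=1}^{N}f^{2}(X_{j})-\|f\|_{L^{2}}^{2}\Bigr|\;\le\;\frac{D_{N}^{*}(X)}{\gamma}\,\|f\|_{L^{2}}^{2},
\]
and splitting the absolute value into its two one‑sided forms gives precisely
\[
\Bigl(1-\frac{D_{N}^{*}(X)}{\gamma}\Bigr)\|f\|_{L^{2}}^{2}\;\le\;\frac1N\sum_{j=1}^{N}f^{2}(X_{j})\;\le\;\Bigl(\frac{D_{N}^{*}(X)}{\gamma}+1\Bigr)\|f\|_{L^{2}}^{2},
\]
which is the asserted chain, valid for every realization of the scrambled HSFC sample.

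To attach the probability $1-\epsilon$ I would invoke Theorem \ref{convord1} with confidence level $q=1-\epsilon$: with probability at least $1-\epsilon$ one has $D_{N}^{*}(X)\le \frac{6d^{3/4}}{N^{1/2+1/(2d)}}\sqrt{d\ln(N+1)+c(d,\epsilon)}+\frac{2c(d,\epsilon)}{3N}$, so on that event $D_{N}^{*}(X)/\gamma$ in the bound above may be replaced by the computable quantity $\frac1\gamma\bigl(\frac{6d^{3/4}}{N^{1/2+1/(2d)}}\sqrt{d\ln(N+1)+c(d,\epsilon)}+\frac{2c(d,\epsilon)}{3N}\bigr)$, turning the estimate into an effective sampling inequality; in the notation of \eqref{samineq1} this corresponds to $A=N(1-D_{N}^{*}(X)/\gamma)$ and $B=N(1+D_{N}^{*}(X)/\gamma)$, with $A>0$ once $N$ is large enough that the explicit bound falls below $\gamma$.

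The main obstacle is the first step: pinning down exactly which Hilbert space is intended and verifying that every $f$ in it has $V(f^{2})<\infty$, so that Koksma--Hlawka applies; and, if one wants $\gamma$ to be a structural constant rather than depending on the individual $f$, controlling the ratio $V(f^{2})/\|f\|_{L^{2}}^{2}$ uniformly over the relevant unit ball. Once that is settled, the remainder is just the rearrangement above together with a direct citation of Theorem \ref{convord1}.
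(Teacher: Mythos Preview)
Your argument is correct (modulo the gap you yourself flag, that the ambient space must be specified so that $V(f^{2})<\infty$), but it follows a genuinely different route from the paper. The paper never applies Koksma--Hlawka to $f^{2}$. Instead it posits an integral-operator model for the sampling values,
\[
f^{2}(x)=\int_{[0,1]^{d}}K(x,y)\,g^{2}(y)\,dy,\qquad K(x,y)=\int_{[0,1]^{d}}\mathbf{1}_{(x,1]}(t)\,\mathbf{1}_{(y,1]}(t)\,dt,
\]
bounds the quadrature error by $\bigl\|\tfrac{1}{N}\sum_{j}K(x_{j},\cdot)-\int K(x,\cdot)\,dx\bigr\|_{\infty}\,\|g\|_{L^{2}}^{2}$, and then shows by direct manipulation of the indicator functions that this sup-norm is at most $D_{N}^{*}(X)$. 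The constant $\gamma$ in the paper is not $\|f\|_{L^{2}}^{2}/V(f^{2})$; it comes from the two-sided comparison $\gamma\|g\|_{L^{2}}^{2}\le\|f\|_{L^{2}}^{2}\le\|g\|_{L^{2}}^{2}$ that the kernel representation induces.

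What each approach buys: yours is more elementary and needs only the standing Koksma--Hlawka inequality, but your $\gamma$ is function-dependent through $V(f^{2})$, and you must assume enough smoothness on $f$ to make that variation finite. The paper's route avoids any variation hypothesis on $f^{2}$ and instead places the burden on the structural assumption that $f^{2}$ lies in the range of the integral operator with kernel $K$; its $\gamma$ is then an operator-theoretic constant, in keeping with the sampling-theory framework (Aldroubi--Gr\"ochenig) it cites.
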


\begin{proof}
Consider the following kernel function,
$$K(x,y)=\int_{[0,1]^{d}}\mathbf{1}_{(x,1]}(t)
\mathbf{1}_{(y,1]}(t)dt.$$

It is not realistic to measure exactly for samples $f(x_j),1\leq j\leq N$, see discussions in \cite{AG1}. A better assumption is that the sampling number has the following form:
\begin{equation}\label{opform1}
\begin{aligned}
f^2(x)=\int_{[0,1]^{d}}K(x,y)g^2(y)dy,
\end{aligned}
\end{equation}
where $f,g$ belongs to Hilbert space. For samples $x_{j},1\leq j\leq N,$ $K(x_j,y)$ is a set of functionals acting on the function $g^2$ to produce the data $f^2(x_j)$. The $K(x_j,y)$ functionals can reflect the characteristics of the sampling devices.

Therefore, we have
\begin{equation}\label{rkhs1}
\begin{aligned}
&|\frac{1}{N}\sum_{j=1}^{N}f^2(x_j)
-\int_{[0,1]^{d}}f^2(x)dx|\\=&|\frac{1}{N}
\sum_{j=1}^{N}\int_{[0,1]^{d}}K(x_j,y)g^2(y)dy-
\int_{[0,1]^{d}}\int_{[0,1]^{d}}K(x,y)g^2(y)dydx|
\\=&|\int_{[0,1]^d}\sum_{j=1}^{N}\frac{1}{N}
K(x_j,y)g^2(y)dy-
\int_{[0,1]^{d}}g^2(y)\int_{[0,1]^{d}}K(x,y)dxdy|,
\end{aligned}
\end{equation}
and 

\begin{equation}
    \begin{aligned}
    &|\int_{[0,1]^d}\sum_{j=1}^{N}\frac{1}{N}
K(x_j,y)g^2(y)dy-
\int_{[0,1]^{d}}g^2(y)\int_{[0,1]^{d}}K(x,y)dxdy|
\\=&|\int_{[0,1]^{d}}
\Big(\sum_{j=1}^{N}\frac{1}{N}K(x_j,y)
-\int_{[0,1]^{d}}K(x,y)dx\Big)g^2(y)dy|
\\ \leq&\|\sum_{j=1}^{N}\frac{1}{N}K(x_j,y)
-\int_{[0,1]^{d}}K(x,y)dx\|_{\infty}\|g\|_{L^2}^2.
    \end{aligned}
\end{equation}

Therefore, according to \eqref{opform1}, we obtain $\gamma\|g\|_{L^2}^2\leq \|f\|_{L^2}^2\leq \|g\|_{L^2}^2,$
where $0<\gamma\leq 1$.

In addition, combining with theorem \ref{convord1}, we have
\begin{equation}\label{rkhs2}
\begin{aligned}
&\|\sum_{j=1}^{N}\frac{1}{N}K(x_j,y)
-\int_{[0,1]^{d}}K(x,y)dx\|_{\infty}\\ \leq& \sup_{y\in[0,1]^{d}}|\int_{[0,1]^{d}}\sum_{j=1}^{N}
\frac{1}{N}\mathbf{1}_{(x_j,1]}(t)
\mathbf{1}_{(y,1]}(t)dt-\int_{[0,1]^{d}}
\int_{[0,1]^{d}}
\mathbf{1}_{(x,1]}(t)\mathbf{1}_{(y,1]}(t)dtdx|\\
=&\sup_{y\in[0,1]^{d}}|\int_{[0,1]^{d}}\Big(\sum_{j=1}^{N}
\frac{1}{N}\mathbf{1}_{(x_j,1]}(t)-\int_{[0,1]^{d}}
\mathbf{1}_{(x,1]}(t)dx\Big)\mathbf{1}_{(y,1]}(t)dt|\\ \leq
&\sup_{y\in[0,1]^{d}}\|\sum_{j=1}^{N}
\frac{1}{N}\mathbf{1}_{(x_j,1]}(t)-\int_{[0,1]^{d}}
\mathbf{1}_{(x,1]}(t)dx\|_{\infty}
\int_{[0,1]^{d}}\mathbf{1}_{\mathbf{1}_{(y,1]}}(t)dt\\ \leq
&\|\sum_{j=1}^{N}
\frac{1}{N}\mathbf{1}_{(x_j,1]}(t)-\int_{[0,1]^{d}}
\mathbf{1}_{(x,1]}(t)dx\|_{\infty}\\ \leq &\sup_{t\in[0,1]^{d}}|\sum_{j=1}^{N}
\frac{1}{N}\mathbf{1}_{(x_j,1]}(t)-\int_{[0,1]^{d}}
\mathbf{1}_{(x,1]}(t)dx|\\ \leq &D_{N}^{*}(x_1,x_2,\ldots,x_N).
\end{aligned}
\end{equation}
holds with probability at least $\epsilon$.

Combining with \eqref{rkhs1}-\eqref{rkhs2}, the desired result is obtained.

\end{proof}

\subsection{Application on Koksma–Hlawka inequality}
The classical K-H inequality is not suitable for functions with simple discontinuities. While \cite {GBCT} proposes a variant K-H type inequality that is suitable for piecewise smooth functions of $f\cdot\mathbf{1}_{\Omega}$, where $f$ is smooth and $\Omega$ is a Borel convex subset of $[0,1]^{d}$. In this subsection, we will use the star discrepancy upper bound of Hilbert space filling curve sampling to give an approximation error in the piecewise smooth function space. First of all, there is the following lemma:

\begin{lma}\cite{GBCT}\label{vkh}
Let $f$ be a piecewise smooth function on $[0,1]^{d}$, and $\Omega$ be a Borel subset of $[0,1]^{d}$. Then for $x_{1},x_{2},\ldots,x_{N}$ in $[0,1]^{d}$, we have

\begin{equation}
|\frac{\sum_{n=1}^{N}(f\cdot\mathbf{1}_{\Omega})(x_{n})}{N}-
\int_{\Omega}f(x)dx|\leq D_{N}^{\Omega}(x_{1},x_{2},\ldots,x_{N})\cdot V(f),
\end{equation}
where

\begin{equation}\label{Borel}
 D_{N}^{\Omega}(x_{1},x_{2},\ldots,x_{N})=2^{d}\sup_{A\in[0,1]^{d}}
 |\frac{\sum_{n=1}^{N}\mathbf{1}_{\Omega\cap A}(x_{n})}{N}-\lambda(\Omega\cap A)|,
\end{equation}
and

\begin{equation}\label{vf0}
V(f)=\sum_{u\subset\{1,2,\ldots,d\}}2^{d-|u|}\int_{[0,1]^{d}}
|\frac{\partial^{|u|}}{\partial x_{u}}f(x)|dx.
\end{equation}
The symbol $\frac{\partial^{|u|}}{\partial x_{n}}f(x)$ is the partial derivative of $f$ with respect to those components of $x$ with index in $u$, and the supremum is taken over all axis-parallel boxes $A$.
\end{lma}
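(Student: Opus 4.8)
Although the statement is quoted from \cite{GBCT}, the following is how I would prove it. The plan is to expand $f$ by the anchored Hlawka--Zaremba identity — that is, by iterating the one-dimensional fundamental theorem of calculus starting from the corner $\mathbf 0=(0,\dots,0)$ — so that the error $\frac1N\sum_n(f\mathbf 1_\Omega)(x_n)-\int_\Omega f$ becomes a weighted integral of discrepancies of the sets $\Omega\cap A$ with $A$ an axis-parallel box. The factor $2^{d}$ built into $D_N^{\Omega}$ will provide all the slack needed to absorb the combinatorial constants that arise.

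Write $\partial_u f$ for the mixed partial $\partial^{|u|}f/\partial x_u$, $u^{c}:=\{1,\dots,d\}\setminus u$, and $(t_u;\mathbf 0_{u^{c}})$ for the point with $j$-th coordinate $t_j$ if $j\in u$ and $0$ if $j\notin u$. The identity is
\[
f(x)=\sum_{u\subseteq\{1,\dots,d\}}\int_{[0,1]^{|u|}}\Big(\prod_{j\in u}\mathbf 1_{[t_j,1]}(x_j)\Big)\,\partial_u f(t_u;\mathbf 0_{u^{c}})\,dt_u ,
\]
and for fixed $t_u$ the factor $\prod_{j\in u}\mathbf 1_{[t_j,1]}(x_j)$ is the indicator of the axis-parallel box $B(u,t_u):=\prod_{j\in u}[t_j,1]\times\prod_{j\notin u}[0,1]$. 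Substituting this expansion into $\frac1N\sum_n f(x_n)\mathbf 1_\Omega(x_n)$ and into $\int_\Omega f=\int_{[0,1]^{d}}f\mathbf 1_\Omega$, and interchanging in each case the finite sum, respectively the integral in $x$, with the integral in $t_u$, the identity $\mathbf 1_\Omega\mathbf 1_{B}=\mathbf 1_{\Omega\cap B}$ collapses the difference termwise into
\[
\frac1N\sum_{n=1}^{N}(f\mathbf 1_\Omega)(x_n)-\int_\Omega f=\sum_{u\subseteq\{1,\dots,d\}}\int_{[0,1]^{|u|}}\partial_u f(t_u;\mathbf 0_{u^{c}})\Big[\frac1N\sum_{n=1}^{N}\mathbf 1_{\Omega\cap B(u,t_u)}(x_n)-\lambda\big(\Omega\cap B(u,t_u)\big)\Big]dt_u .
\]

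Taking absolute values, every bracket is at most $\mathcal D:=\sup_{A}\big|\frac1N\sum_n\mathbf 1_{\Omega\cap A}(x_n)-\lambda(\Omega\cap A)\big|$, the supremum running over all axis-parallel boxes $A\subseteq[0,1]^{d}$, so that $D_N^{\Omega}=2^{d}\mathcal D$; pulling $\mathcal D$ out of the sum leaves $\mathcal D\sum_{u}\int_{[0,1]^{|u|}}|\partial_u f(t_u;\mathbf 0_{u^{c}})|\,dt_u$. It then remains to show this sum is at most $2^{d}V(f)$. For that I would un-freeze, one at a time, the coordinates $j\in u^{c}$ that sit at $0$: from the identity $\partial_u f(\dots;0;\dots)=\int_0^1\partial_u f(\dots;x_j;\dots)\,dx_j-\int_0^1\!\!\int_0^{x_j}\partial_{u\cup\{j\}}f(\dots;s;\dots)\,ds\,dx_j$ one obtains, after integrating, $\|\partial_u f(\dots;0;\dots)\|_{1}\le\|\partial_u f\|_{1}+\|\partial_{u\cup\{j\}}f\|_{1}$ with the $L^1$-norms now over one additional coordinate. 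Iterating over all of $u^{c}$ gives $\int_{[0,1]^{|u|}}|\partial_u f(t_u;\mathbf 0_{u^{c}})|\,dt_u\le\sum_{v\subseteq u^{c}}\int_{[0,1]^{d}}|\partial_{u\cup v}f(x)|\,dx$, and when we sum over $u$ each subset $w\subseteq\{1,\dots,d\}$ appears exactly $2^{|w|}$ times, so the total is $\sum_{w}2^{|w|}\int_{[0,1]^{d}}|\partial_w f|\,dx\le 2^{d}\sum_{w}2^{d-|w|}\int_{[0,1]^{d}}|\partial_w f|\,dx=2^{d}V(f)$. Putting the pieces together yields $\big|\frac1N\sum_n(f\mathbf 1_\Omega)(x_n)-\int_\Omega f\big|\le 2^{d}\mathcal D\,V(f)=D_N^{\Omega}V(f)$, which is the assertion.

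The routine but error-prone part of all this is the final bookkeeping: keeping track of which coordinates each box $B(u,t_u)$ actually constrains, of the orientation of the anchoring, and of how the telescoping of the frozen coordinates reproduces precisely the weights that $V(f)$ together with the $2^{d}$ prefactor can absorb. The one genuinely delicate point is the regularity of $f$: when $f$ is continuous and piecewise smooth, the line-by-line fundamental theorem of calculus applies with the derivatives understood almost everywhere and with no boundary terms, and the argument above goes through verbatim; but if $f$ is allowed to have jumps, then the expansion picks up extra jump contributions across the smoothness interfaces which must be controlled separately — this is the technical heart of the argument in \cite{GBCT}. In the use made of the lemma here, $f$ is smooth on all of $[0,1]^{d}$ and only $f\cdot\mathbf 1_\Omega$ is discontinuous, so this difficulty does not actually occur.
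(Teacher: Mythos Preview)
The paper does not prove this lemma; it is stated with a citation to \cite{GBCT} and used as a black box in the proof of Theorem~\ref{convexsetapp}. So there is no ``paper's own proof'' to compare against.

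That said, your argument is sound for smooth $f$ and is essentially the classical route: the anchored Hlawka--Zaremba expansion turns the quadrature error into an integral of box discrepancies against mixed partials, and the combinatorial bookkeeping at the end is correct---each $w$ really does appear $2^{|w|}$ times, and $2^{|w|}\le 2^{2d-|w|}$ supplies exactly the slack needed to match $2^{d}V(f)$. Your un-freezing identity $g(0)=\int_0^1 g(x_j)\,dx_j-\int_0^1\!\int_0^{x_j} g'(s)\,ds\,dx_j$ is a clean way to handle the anchoring, and the resulting inequality is what one wants. You also correctly isolate the only genuinely subtle point: for merely piecewise smooth $f$ the line-by-line fundamental theorem picks up jump terms, which is the technical content of \cite{GBCT}; but in the application here $f$ itself is smooth and only the truncation $\mathbf 1_\Omega$ introduces discontinuity, so that issue is moot.
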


\begin{thm}\label{convexsetapp}
For integers $b\ge 1$, $m\ge 1$ and $N=b^m$. Let $f$ be a piecewise smooth function on $[0,1]^{d}$, and $\Omega$ be a Borel convex subset of $[0,1]^{d}$. Then for well-defined $d-$dimensional Hilbert space filling curve samples $X_i\sim U(E_i), i=1,2,\ldots, N=b^m$, we have 

\begin{equation}
|\frac{\sum_{n=1}^{N}(f\cdot\mathbf{1}_{\Omega})(X_{n})}{N}-
\int_{\Omega}f(x)dx|\leq 2^{d}D_{N}^{*}(X_1,X_2,\ldots,X_N)\cdot V(f),
\end{equation}
where $D_{N}^{*}(X_1,X_2,\ldots,X_N)$ was obtained in Theorem \ref{convord1}.

Where

\begin{equation}\label{vf}
V(f)=\sum_{u\subset\{1,2,\ldots,d\}}2^{d-|u|}\int_{[0,1]^{d}}
|\frac{\partial^{|u|}}{\partial x_{u}}f(x)|dx.
\end{equation}
The symbol $\frac{\partial^{|u|}}{\partial x_{n}}f(x)$ is the partial derivative of $f$ with respect to those components of $x$ with index in $u$.
\end{thm}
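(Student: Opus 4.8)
The plan is to combine Lemma~\ref{vkh} with the probabilistic star discrepancy bound of Theorem~\ref{convord1}. The essential observation is that the quantity $D_{N}^{\Omega}(X_{1},\ldots,X_{N})$ appearing in Lemma~\ref{vkh} is a supremum over all axis-parallel boxes $A$ of $|\frac{1}{N}\sum_{n=1}^{N}\mathbf{1}_{\Omega\cap A}(X_{n})-\lambda(\Omega\cap A)|$, scaled by $2^{d}$. So the first step is simply to apply Lemma~\ref{vkh} verbatim with $x_{n}=X_{n}$, which gives
\begin{equation*}
\Big|\frac{\sum_{n=1}^{N}(f\cdot\mathbf{1}_{\Omega})(X_{n})}{N}-\int_{\Omega}f(x)\,dx\Big|\leq 2^{d}\Big(\sup_{A}\big|\tfrac{1}{N}\textstyle\sum_{n=1}^{N}\mathbf{1}_{\Omega\cap A}(X_{n})-\lambda(\Omega\cap A)\big|\Big)\cdot V(f).
\end{equation*}
It then remains to bound the supremum by $D_{N}^{*}(X_{1},\ldots,X_{N})$, at which point the probabilistic bound from Theorem~\ref{convord1} can be substituted.

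Second, I would argue that $\sup_{A}|\frac{1}{N}\sum_{n=1}^{N}\mathbf{1}_{\Omega\cap A}(X_{n})-\lambda(\Omega\cap A)|$ admits the same bound as the ordinary star discrepancy of the point set for the HSFC sampling. The key structural fact used in the proof of Theorem~\ref{convord1} is that, because $\Omega$ is convex, $\partial\Omega$ admits finite $(d-1)$-dimensional Minkowski content with $\mathscr{M}(\partial\Omega)\leq 2d$; and the intersection $\Omega\cap A$ with an axis-parallel box is again a convex set whose boundary has Minkowski content at most that of the box plus that of $\Omega$, hence still $O(d)$. Therefore the same $\delta$-cover discretisation and the same counting estimate on the number of strata $E_{j}$ straddling $\partial(\Omega\cap A)$ — namely $|J|\leq 12 d\sqrt{d+3}\,N^{1-1/d}$ up to an absolute constant — go through, and the same Bernstein-inequality/union-bound argument yields exactly the bound of Theorem~\ref{convord1} for $\sup_{A}|\cdots|$. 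Alternatively, and more cheaply, one can note that for any convex $\Omega$ and any box $A$ the test set $\Omega\cap A$ is handled by the very same proof of Theorem~\ref{convord1} (which is in fact written for an arbitrary convex test region with Minkowski content $\le 2d$, the anchored box being just a special case), so no new work is needed: one simply invokes Theorem~\ref{convord1} with the understanding that $D_{N}^{*}(X)$ there already dominates the supremum over convex test sets.

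Third, I would conclude by substituting: with probability at least $q$,
\begin{equation*}
\sup_{A}\big|\tfrac{1}{N}\textstyle\sum_{n=1}^{N}\mathbf{1}_{\Omega\cap A}(X_{n})-\lambda(\Omega\cap A)\big|\;\leq\;D_{N}^{*}(X_{1},\ldots,X_{N})\;\leq\;\frac{6d^{3/4}}{N^{1/2+1/(2d)}}\sqrt{d\ln(N+1)+c(d,q)}+\frac{2c(d,q)}{3N},
\end{equation*}
and plugging this into the inequality from Lemma~\ref{vkh} gives the stated conclusion $|\frac{1}{N}\sum(f\mathbf{1}_{\Omega})(X_{n})-\int_{\Omega}f|\leq 2^{d}D_{N}^{*}(X_{1},\ldots,X_{N})V(f)$, holding with probability at least $q$, with $V(f)$ as in \eqref{vf}.

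The main obstacle is the second step: one must be careful that the supremum over $\Omega\cap A$ with $A$ ranging over \emph{all} axis-parallel boxes (not just anchored boxes $[0,x]$) is genuinely controlled by the bound of Theorem~\ref{convord1}. This requires either re-running the Minkowski-content counting argument with $\Omega\cap A$ in place of the anchored box $R$ — checking that convexity of $\Omega\cap A$ keeps $\mathscr{M}(\partial(\Omega\cap A))$ of order $d$ and that a $\delta$-cover for general boxes still has cardinality polynomial in $N$ of the required form — or citing that the proof of Theorem~\ref{convord1} was already written for arbitrary convex test sets. A secondary subtlety is the factor $2^{d}$: it comes entirely from Lemma~\ref{vkh} and must be carried through without attempting to absorb it, so the final constant is genuinely exponential in $d$; this is acceptable since the theorem statement already displays the $2^{d}$ explicitly.
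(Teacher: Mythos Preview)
Your proposal is correct and follows essentially the same approach as the paper's proof: apply Lemma~\ref{vkh}, observe that $\Omega\cap A$ is convex so $\mathscr{M}(\partial(\Omega\cap A))\leq 2d$, note that a $\tfrac{1}{N}$-cover for boxes $A$ induces one for the sets $\Omega\cap A$, and then re-run the Minkowski-content/Bernstein/union-bound argument of Theorem~\ref{convord1} verbatim for these convex test sets. Your flagging of the general-box-versus-anchored-box issue for the $\delta$-cover is in fact more explicit than the paper's own sketch, which simply asserts that the cover number carries over.
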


\begin{proof}
Because $\Omega\subset [0,1]^{d}$ is a convex set, and in \eqref{Borel}, $A$ is a convex set, so the test set $\Omega\cap A$ is a convex set and $\mathscr{M}(\partial( \Omega\cap A))\leq 2d$. In addition, for the set $A_1$ and $A_1\subset A_2$, and which satisfy $\lambda(A_2)-\lambda(A_1)\leq \frac{1}{N}$, then $\lambda(\Omega\cap A_2)$ and $\lambda(\Omega\cap A_1)$ constitute $\frac{1}{N}-$ cover. Through $\lambda(\Omega\cap A_2)-\lambda(\Omega\cap A_1)\leq \frac{1}{N}$, we can obtain the cover number. Furthermore, according to the idea of the proof of theorem \ref{convord1}, we get the desired result.
\end{proof}

\textbf{An example:}
For $\epsilon>0$, let $\Sigma$ be the simplex

$$\Sigma=\{(x_1,x_2,\ldots,x_d)\in [0,1]^d:x_1\ge\ldots \ge x_d\ge \epsilon, 1-x_1-\ldots-x_d\ge \epsilon\}.$$

Define 

$$f(x_1,x_2,\ldots,x_d)=\frac{1}{x_1x_2\ldots x_d}(1-x_1-x_2-\ldots-x_d).$$

Then one can show that

$$\sum_{|\alpha|\leq d}\int_{\Sigma}
|(\frac{\partial}{\partial x})^{\alpha}f(x)|dx\leq c\dot \epsilon^{-d}.$$

By theorem \ref{convexsetapp}, for Hilbert space filling curve samples $X_i, i=1,2,\ldots,N$, and all convex sets $\Omega$ contained in $\Sigma$, we have

\begin{equation}
|\frac{\sum_{n=1}^{N}(f\cdot\mathbf{1}_{\Omega})(X_{n})}{N}-
\int_{\Omega}f(x)dx|\leq c\dot \epsilon^{-d}\cdot2^{d}\cdot D_{N}^{*}(X_1,X_2,\ldots,X_N).
\end{equation}

\section{Conclusions}\label{discuss}

The convergence of the star discrepancy bound for HSFC-based sampling is $O(N^{-\frac{1}{2}-\frac{1}{2d}}\cdot (\ln N)^{\frac{1}{2}})$, which matches the rate using jittered sampling sets and improves the rate using the classical MC method. The strict condition for the sampling number $N=m^d$ in the jittered case is removed, and the applicability of the new stratified sampling method for higher dimensions is improved. Although a faster convergence of the upper bound leads to a faster integration approximation rate, the strict comparison of the size of random star discrepancy under different stratified sampling models remains unresolved.

\end{document}